\newtheorem{theorem}{Theorem}[section]
\newtheorem{definition}[theorem]{Definition}
\newtheorem{lemma}[theorem]{Lemma}
\newtheorem{cor}[theorem]{Corollary}
\newtheorem{notation}[theorem]{Notation}
\begin{document}

\title{\Large\bf The Congruence $\mathscr{Y^*}$ on Quasi Completely Regular Semirings}

\author {\textbf{S. K. Maity} \\
\small Department of Pure Mathematics, University  of  Calcutta \\
\small 35, Ballygunge Circular Road, Kolkata-700019, India.\\
\small e-mail: skmpm@caluniv.ac.in
\and
\textbf{R. Ghosh} \\
\small Department of Mathematics,\\
\small Heritage Institute of Technology,\\
\small Anandapur, P.O.  East Kolkata Township, Kolkata - 700107\\
\small e-mail: rituparna.ghosh@heritageit.edu}

\date{}

\maketitle	

\begin{abstract} 
A semiring $(S,+, \cdot)$ is said to be a 	quasi-orthodox semiring if for any $e, f \in E^+(S)$ there exists some positive integer $n$ such that $n(e + f) = (n + 1)(e + f)$. In this paper we investigate the congruence generated by $\mathscr{Y}$ on quasi completely regular semirings with and without the special conditions like being quasi-orthodox. We further establish that the interval which $\mathscr{Y^*}$ belongs to is $[\epsilon, \nu]$.

\end{abstract}

\vspace{.5em}
AMS Mathematics Subject Classification (2010): 16A78, 20M10, 20M07.

{\bf Key Words:} quasi-orthodox semirings, quasi completely regular semiring, completely
Archimedean semiring,  Rees matrix semiring, skew-ideal,
b-lattice of skew-rings, generalized additive idempotent pure congruence.

\section{Introduction}

The study of the structure of congruences essentially influence the study of structure of semigroups and semirings. The set of all congruences defined on a semiring or semigroup $S$ is a partially ordered set with respect to inclusion and relative to this partial order it forms a lattice, the lattice of congruences $\mathcal{C}(S)$ on $S$. Completely regular semigroups is a special class of semigroups that attracted many researchers. A semigroup $(S, \cdot)$ is said to be a completely regular semigroup if for any $a \in S$ there exists $x \in S$ such that, $a = axa$ (i.e., $a$ is regular) and $ax = xa$. In the year 1999, Petrich and Reilly \cite{pet} defined a relation $\mathscr{Y}$ on a completely regular semigroup $S$ by: for $a,b \in S$,

\vspace{-1em}
\begin{center}
\begin{tabular}{ccl}
$a\, \mathscr{Y} \, b$ & if and only if & $V(a)=V(b) $.
\end{tabular}
\end{center}
A regular semigroup is orthodox if its idempotents form a subsemigroup. An orthodox completely regular semigroup is an orthogroup. In the context of orthogroups, $\mathscr{Y}$ was
proved to be the least Clifford congruence on $S$. It was proposed as an open problem that, what can be said about the congruence generated by $\mathscr{Y}$ on a completely regular semigroups, denoted by $\mathscr{Y}^*$. Recently, in 2011, C. Guo, G. Liu and Y. Guo \cite{guo} solved this open problem and proved that $\mathscr{Y}^* \in [\epsilon, \nu ]$ on completely regular semigroups. Furthermore, a description of $\mathscr{Y}^*$ on completely simple semigroups and normal cryptogroups was also given by them. In \cite{maityY}, we extended these results on completely regular semiring. The main aim of this paper is to further extend these ideas on quasi completely regular semirings with or without its being quasi-orthodox.

In Section $2$ we discuss the preliminaries and prerequisites needed for the paper. In Section $3$ we introduce some more results on quasi completely regular semirings showing the equivalence of Green's relations $\stackrel{*}{\mathscr{J}^{+}}$ and $\stackrel{*}{\mathscr{D}^{+}}$ over quasi completely regular semirings. Also here we show how $\nu$ and $\stackrel{*}{\mathscr{D^{+}}}$ are related over quasi completely regular semirings. In our paper \cite{maity2} we introduced quasi-orthodox quasi completely regular semirings and defined a relation $\nu$ on quasi-orthodox quasi completely regular semirings. Also we established some interesting conditions under which $\nu$ becomes least b-lattice of skew-rings congruence. In Section $4$ we show that the relation $\mathscr{Y}$ and $\nu$ are equivalent on a  quasi-orthodox quasi completely regular semiring. Finally, in Section $5$ we try to describe ${\mathscr{Y^*}}$ on quasi completely regular semirings without any other special conditions like being quasi-orthodox and establish the interval which $\mathscr{Y^*}$ belongs to is $[\epsilon, \nu]$.

\section{Priliminaries}

A semiring $(S, +, \cdot)$ is a type (2, 2)-algebra whose semigroup reducts $(S, +)$ and $(S, \cdot)$ are connected by ring like distributivity, that is, $a(b+c)=ab+ac$ \, and $(b+c)a=ba+ca$ for all $a, b, c \in S$. A semiring $(S, +, \cdot)$ is called additively regular if for every element $a \in
S$ there exists an element $x \in S$ such that $a + x + a = a$. We call a semiring $(S, +, \cdot)$ additively quasi regular if for every element $a \in S$ there exists a positive integer $n$ such that $na$ is additively regular. An element $a$ in a semiring $(S, +, \cdot)$ is said to be completely regular \cite{sen} if there exists $x \in S$ such that, $a = a+x+a$, $a+x = x+a$ and $a(a+x) = a+x$. We call a semiring $S$, a completely regular semiring if every element $a$ of $S$ is completely regular.

We define an element $a$ in a semiring $(S, +, \cdot)$ as quasi completely regular \cite{maity} if there exists a positive integer $n$ such that $na$ is completely regular. Naturally, a semiring
$S$ is said to be quasi completely regular if every element of $S$ is quasi completely regular. A semiring $(S, +, \cdot)$ is a b-lattice \cite{sen} if $(S, \cdot)$ is a band and $(S, +)$ is a
semilattice. We always let $E^+(S)$ be the set of all additive idempotents of the semiring $S$. Also we denote the set of all additive inverses of $a$, if it exists, in a semiring $S$ by
$V^+(a)$. For $a \in S$, by `$na$ is $a$-additively regular' we mean that $n$ is the smallest positive integer for which $na$ is additively regular.  If $(S, +, \cdot)$ is a semiring, we denote
Green's relations on the semigroup $(S, +)$ by $\mathscr{L}^+$, $\mathscr{R}^+$, $\mathscr{J}^+$, and $\mathscr{H}^+$. In fact, the relations $\mathscr{L}^+$, $\mathscr{R}^+$, $\mathscr{J}^+$
and $\mathscr{H}^+$ are all congruences of the multiplicative reduct $(S,\cdot)$. Thus, if any one of these happens to be a congruence of the additive reduct $(S, +)$, it will be a semiring
congruence of the semiring $(S, +, \cdot)$. Let $(S, +, \cdot)$ be an additively quasi regular semiring. We consider the relations $\stackrel{*}{\mathscr{L}^{+}}$, $\stackrel{*}{\mathscr{R}^{+}}$and $\stackrel{*}{\mathscr{J}^{+}}$ on $S$ defined by: for $a, b \in S$,
\begin{center}
	$a\, \stackrel{*}{\mathscr{L}^{+}} \, b$ if and only if  $pa \, \mathscr{L}^{+} \, qb$,\\
	$a\, \stackrel{*}{\mathscr{R}^{+}} \, b$ if and only if $pa \, \mathscr{R}^{+} \, qb$,\\
	$a\, \stackrel{*}{\mathscr{J}^{+}} \, b$ if and only if $pa \, \mathscr{J}^{+} \, qb$,
\end{center}
where $p$ and $q$ are the least positive integers such that $pa$ and $qb$  are additively regular. We also let $\stackrel{*}{\mathscr{H}^{+}}=\stackrel{*}{\mathscr{L}^{+}} \cap \stackrel{*}{\mathscr{R}^{+}}$ and $\stackrel{*}{\mathscr{D}^{+}} = \stackrel{*}{\mathscr{L}^{+}} \,o \,\, \stackrel{*}{\mathscr{R}^{+}}$.

Let $\tau$ be a relation on a semiring $S$. Then the relation $\tau^e$ is defined by : for $a, b \in S$,

\vspace{-.5em}
\begin{center}
$a\, \tau^e \,b$ if and only if $a=x+c+y, \, b=x+d+y$ for some 	$x,y \in S^0$ and $c, d \in S$ with $ c\, \tau \, d$.
\end{center}

Also, we define, $\tau^{\natural}=\Big{(}(\tau\cup\tau^{-1} \cup \epsilon)^e \Big{)}^t$, where $\epsilon$ is the equality congruence and $\eta^t$ denotes the transitive closure of $\eta$.

\vspace{.3em} A congruence $\xi$ on a semiring $S$ is called a b-lattice congruence (idempotent semiring congruence) if $S/\xi$ is a b-lattice (respectively, an idempotent semiring). A semiring
$S$ is called a b-lattice (idempotent semiring) $Y$ of semirings $S_{\alpha} ( \alpha \in Y) $ if $S$ admits  a b-lattice congruence (respectively, an idempotent semiring congruence) $\xi$ on $S$ such that $Y = S/ \xi$ and each $S_{\alpha}$ is a $\xi$-class mapped onto $\alpha$ by the natural epimorphism ${\xi}^{\#}\,$ $\,:\,S \longrightarrow Y$. We write $S=(Y; S_{\alpha})$.

A semiring $(S, +, \cdot)$ is called a skew-ring if its additive reduct $(S,+)$ is a group, not necessarily an abelian group. Let $S$ be a semiring and $R$ be a subskew-ring of $S$. If for every
$a\in S$ there exists a positive integer $n$ such that $na \in R$, then $S$ is said to be a quasi skew-ring \cite{maity}. A quasi completely regular semiring $S$ is said to be  completely Archimedean
\cite{maity} if $\stackrel{*}{\mathscr{J}^{+}} = S\times S$. In \cite{maity1}, we proved that a semiring is a completely Archimedean semiring if and only if it is nil-extension of a completely simple semiring. Also, we proved in \cite{maity} that a semiring $S$ is a quasi completely regular semiring if and only if $S$ is b-lattice $Y$ of completely Archimedean semirings $S_{\alpha} (\alpha \in Y)$ if and only if $S$ is an idempotent semiring $I$ of quasi skew-ring $T_{i} \, (i \in I)$.  A semiring $(S,+,\cdot)$ is said to be a quasi-orthodox semiring \cite{maity2} if for any two elements $e,f \in E^{+}(S)$, there exists a positive integer $n$ such that $n(e+f) = (n+1)(e+f)$.

Throughout this paper we denote the least skew-ring congruence by $\sigma$, the least completely regular semiring congruence by $\rho$, and the least b-lattice of skew-rings congruence by $\nu$
on a semiring $S$. In fact we proved in \cite{maity2} that on a quasi completely regular semiring $S$, $a\, \rho \,b$ $(a,b \in S)$ if and only if $a+0_a=b+0_b$, where $0_x$ is the unique additive idempotent element in the quasi skew-ring containing $x$. We always let $S=(Y; S_{\alpha})$ be a quasi completely regular semiring, where $Y$ is a b-lattice and $S_{\alpha}$ $(\alpha \in Y)$ is a completely Archimedean semiring. Let $S_{\alpha}$ be a nil-extension of a completely simple semiring $K_{\alpha}$ \cite{maity1}.

For other notations and terminologies not given in this paper, the reader is referred to the texts of Bogdanovic \cite{sb}, Howie \cite{how}, Golan \cite{gol} and Petrich and Reilly \cite{pet}.

\section{Congruence Results on Quasi completely regular semirings}

We introduced quasi orthodox quasi completely regular semirings in \cite{maity2} and established least b-lattice of skew-rings congruence $\nu$. In this section we establish some useful results related to Greens relations $\stackrel{*}{\mathscr{J}^{+}}$ and $\stackrel{*}{\mathscr{D}^{+}}$ and establish their equivalence. Also here we show how $\nu$ and $\stackrel{*}{\mathscr{D}^{+}}$ are related over quasi completely regular semirings.
 
\begin{theorem}
Let $S=(Y; S_{\alpha})$ be a quasi completely regular semiring. Then $\stackrel{*}{\mathscr{J}^{+}} = \stackrel{*}{\mathscr{D}^{+}} $.
\end{theorem}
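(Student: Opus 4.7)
The plan is to establish the two inclusions separately, the easy one from general properties of Green's relations and the harder one via the structural decomposition of $S$.

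For $\stackrel{*}{\mathscr{D}^{+}} \subseteq \stackrel{*}{\mathscr{J}^{+}}$, I would just unwind the definitions: if $a \stackrel{*}{\mathscr{L}^{+}} c$ and $c \stackrel{*}{\mathscr{R}^{+}} b$, then for the least positive integers $p, r, q$ making $pa, rc, qb$ additively regular, we have $pa \, \mathscr{L}^{+} \, rc$ and $rc \, \mathscr{R}^{+} \, qb$; since $\mathscr{L}^{+}, \mathscr{R}^{+} \subseteq \mathscr{J}^{+}$ and $\mathscr{J}^{+}$ is transitive on $(S,+)$, we obtain $pa \, \mathscr{J}^{+} \, qb$, i.e., $a \stackrel{*}{\mathscr{J}^{+}} b$.

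For the reverse inclusion, assume $a \stackrel{*}{\mathscr{J}^{+}} b$, so that $pa \, \mathscr{J}^{+} \, qb$ with $pa$ and $qb$ additively regular. Denoting by $\xi$ the b-lattice congruence corresponding to $S = (Y; S_{\alpha})$, suppose $pa \in S_{\alpha}$ and $qb \in S_{\beta}$. Writing out $pa = x + qb + y$ and $qb = u + pa + v$ and applying $\xi^{\#}$ places $\alpha$ and $\beta$ each below the other in the natural semilattice order on $(Y,+)$, which is antisymmetric, forcing $\alpha = \beta$. Hence $pa$ and $qb$ sit in the same completely Archimedean component $S_{\alpha}$. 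Since $S_{\alpha}$ is a nil-extension of a completely simple semiring $K_{\alpha}$, every additively regular element of $S_{\alpha}$ lies in $K_{\alpha}$, so $pa, qb \in K_{\alpha}$. I would then invoke the classical fact $\mathscr{J} = \mathscr{D}$ on the additive reduct of a completely simple semigroup $K_{\alpha}$ to produce $c \in K_{\alpha}$ with $pa \, \mathscr{L}^{+} \, c \, \mathscr{R}^{+} \, qb$; these relations transfer to the ambient $S$. Because $c$ is itself additively regular, the least positive integer $r$ with $rc$ additively regular is $r = 1$, so $a \stackrel{*}{\mathscr{L}^{+}} c$ and $c \stackrel{*}{\mathscr{R}^{+}} b$, whence $a \stackrel{*}{\mathscr{D}^{+}} b$.

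The main obstacle I anticipate is the step that forces $pa$ and $qb$ to lie in the same $S_{\alpha}$; this is what makes the structural reduction possible and it depends on the b-lattice congruence being compatible with $\mathscr{J}^{+}$ together with the antisymmetry of the natural order on the additive semilattice of $Y$. Once this localisation is secured, everything else amounts to a clean reduction to the well-known coincidence of $\mathscr{J}$ and $\mathscr{D}$ on completely simple semigroups, plus the mild bookkeeping that $r = 1$ for any element of $K_{\alpha}$.
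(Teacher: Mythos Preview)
Your proposal is correct and follows essentially the same route as the paper: localise to a single completely Archimedean component, drop into its completely simple kernel $K_{\alpha}$, and use that $\mathscr{J}=\mathscr{D}$ there. The only cosmetic difference is that the paper names the intermediate element explicitly (it takes $c=qb+pa$ and checks $pa\,\mathscr{L}^{+}\,(qb+pa)\,\mathscr{R}^{+}\,qb$ by hand using $0_{pa+qb+pa}=0_{pa}$), whereas you invoke the coincidence $\mathscr{J}=\mathscr{D}$ on completely simple semigroups as a black box; also, the paper asserts $a,b\in S_{\beta}$ without argument (relying on the prior identification of $\stackrel{*}{\mathscr{J}^{+}}$-classes with the $S_{\alpha}$), while you supply that justification via the b-lattice quotient.
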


\begin{proof}
Clearly, $\stackrel{*}{\mathscr{D}^{+}} \subseteq \stackrel{*}{\mathscr{J}^{+}} $. Now, let $a \, \stackrel{*}{\mathscr{J}^{+}}  \, b$. Then $a,b\in S_{\beta}$ for some $\beta \in Y$ and $pa \,
\mathscr{J^+} \, qb$, where $pa$ and $qb$ are $a$-additively regular and $b$-additively regular, respectively. Therefore, $pa,qb \in K_{\beta}$. Since $(K_{\beta},+)$ is a completely simple semigroup, it follows that
	
\vspace{-.5em}
\begin{center}
$0_{pa+qb+pa}=0_{pa}$ and $0_{qb+pa+qb}=0_{qb}$.
\end{center}
\vspace{-1em}

Now, $pa=pa+0_{pa} = pa+0_{pa+qb+pa}=pa+(pa+qb+pa)'+pa+(qb+pa)$ implies $pa \, \mathscr{L^+} \, (qb+pa)$. Similarly, $(qb+pa) \, \mathscr{R^+}\, qb$. Hence, $pa \, \mathscr{D^+} \, qb$ and thus,
$a\,\stackrel{*}{\mathscr{D}^{+}} \, b$. This implies $\stackrel{*}{\mathscr{J}^{+}} 
\subseteq \stackrel{*}{\mathscr{D}^{+}} $. Thus, $\stackrel{*}{\mathscr{J}^{+}}  =
\stackrel{*}{\mathscr{D}^{+}} $.
\end{proof}

\begin{lemma}
For any quasi completely regular semiring $S$,
	
\vspace{-.5em}
\begin{center}
$\nu = \{{(e, f) \, | \, \, e, f \in E^{+}(S)} $ and ${ e \,\stackrel{*}{\mathscr{D}^{+}}\,f}\}^{\natural}$.
\end{center}
\end{lemma}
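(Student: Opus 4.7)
Write $\theta$ for the right-hand side. The plan is to prove the two containments $\theta\subseteq\nu$ and $\nu\subseteq\theta$.

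For $\theta\subseteq\nu$, since $\nu$ is already a semiring congruence (hence reflexive, symmetric, transitive and closed under additive translation by elements of $S^0$), it is enough to show every generator $(e,f)$, with $e,f\in E^+(S)$ and $e\stackrel{*}{\mathscr{D}^+}f$, lies in $\nu$. Because $e$ and $f$ are themselves additively idempotent, the least positive integers making them additively regular are $p=q=1$, so Theorem 3.1 yields $e\,\mathscr{J}^+\,f$; hence $e$ and $f$ belong to a common completely Archimedean component $S_\alpha$, and their $\nu$-classes are additive idempotents in the same skew-ring of $S/\nu$. Since a skew-ring has a unique additive idempotent, $\nu(e)=\nu(f)$, and the $\natural$-closure of the generating set stays inside $\nu$.

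For $\nu\subseteq\theta$ I plan to verify that $\theta$ is itself a b-lattice of skew-rings congruence, forcing $\nu\subseteq\theta$ by minimality. The equivalence properties of $\theta$ and its additive compatibility are immediate from the $\natural$-construction. For multiplicative compatibility I reduce to showing $(sc,sd),(cs,ds)\in\theta$ whenever $(c,d)$ is a generator and $s\in S$; the decisive observation is distributivity: from $c\in E^+(S)$ we get $sc+sc=s(c+c)=sc$, so $sc\in E^+(S)$, and analogously $sd,cs,ds\in E^+(S)$. Writing $c,d\in S_\alpha$ and $s\in S_\gamma$, both $sc$ and $sd$ live in $S_{\gamma\alpha}$, and $\stackrel{*}{\mathscr{D}^+}$ is universal on each completely Archimedean component, so $(sc,sd)$ is itself a generator. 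Once $\theta$ is a semiring congruence, let $\eta$ be the b-lattice congruence with $S/\eta=Y$; it contains every generator and is a semiring congruence, hence $\theta\subseteq\eta$ and $\eta/\theta$ is a b-lattice congruence on $S/\theta$ with quotient $Y$.

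It remains to show each $\eta/\theta$-class, namely $S_\alpha/(\theta|_{S_\alpha})$, is a skew-ring, and this is where I expect the main obstacle. The plan is to exploit that $S_\alpha$ is a nil-extension of the completely simple semiring $K_\alpha$, so $E^+(S_\alpha)\subseteq K_\alpha$ and all these idempotents are fused by $\theta$, producing a unique additive idempotent $\bar e$ in the quotient; moreover, for every $a\in S_\alpha$ a suitable multiple $na$ lies in $K_\alpha$ and admits a weak additive inverse $c\in K_\alpha$. Combining such a weak inverse with additional copies of $a$ and idempotent translations permitted by the $\natural$-closure should produce a genuine two-sided additive inverse of $\bar a$ in $(S_\alpha/\theta,+)$. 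Verifying that $\bar e$ acts as additive identity on the classes coming from the nil part is the most delicate point and will rely on the universality of $\stackrel{*}{\mathscr{D}^+}$ inside $S_\alpha$ to introduce and absorb idempotents freely within the translation closure. Once both checks go through, $(S_\alpha/\theta,+)$ is a group, $S_\alpha/\theta$ is a skew-ring, $S/\theta$ becomes a b-lattice of skew-rings, and $\nu\subseteq\theta$ follows by minimality of $\nu$.
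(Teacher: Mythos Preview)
Your containment $\theta\subseteq\nu$ is argued exactly as in the paper. The divergence is in the direction $\nu\subseteq\theta$.

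The paper's route is much shorter than yours. It observes a single fact: since the generating set $\{(e,f):e,f\in E^{+}(S),\ e\stackrel{*}{\mathscr{D}^{+}}f\}$ sits inside $\stackrel{*}{\mathscr{D}^{+}}$, and $\stackrel{*}{\mathscr{D}^{+}}=\stackrel{*}{\mathscr{J}^{+}}$ (Theorem~3.1) is already a semiring congruence, the $\natural$-closure $\eta$ stays inside $\stackrel{*}{\mathscr{D}^{+}}$. Thus in $S/\eta$ every $\stackrel{*}{\mathscr{D}^{+}}$-class contains a unique additive idempotent, and the paper concludes from this structural fact alone that $S/\eta$ is a b-lattice of skew-rings, whence $\nu\subseteq\eta$. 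No separate verification of multiplicative compatibility or explicit construction of inverses is undertaken.

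Your plan does recover the containment $\theta\subseteq\stackrel{*}{\mathscr{D}^{+}}$ (your inclusion $\theta\subseteq\eta$ into the b-lattice congruence), and your check of multiplicative compatibility via distributivity is correct and more explicit than anything in the paper. But the step you flag as ``the most delicate point'' is a genuine gap that your sketch does not close. To show that the unique idempotent $\bar e$ of $S_{\alpha}/\theta$ acts as an additive identity on classes coming from the nil part, you would need $(e+a,a)\in\theta$ for $a\in S_{\alpha}\setminus K_{\alpha}$. The $\natural$-closure only allows you to pass from $x+c+y$ to $x+d+y$ with $c,d$ both $\stackrel{*}{\mathscr{D}^{+}}$-related idempotents; it can \emph{replace} one idempotent summand by another, but it cannot \emph{remove} an idempotent summand from an expression that has none. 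There is therefore no chain inside $\theta$ linking $e+a$ to $a$, and your proposed use of weak inverses and ``idempotent translations permitted by the $\natural$-closure'' cannot manufacture one. The paper sidesteps this entirely by invoking the structural characterization of b-lattices of skew-rings (a quasi completely regular semiring with a single additive idempotent per $\stackrel{*}{\mathscr{D}^{+}}$-class) rather than building the group structure of $S_{\alpha}/\theta$ by hand.
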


\begin{proof}
	
Let $\eta$ = $\{{(e, f) \, | \, \, e,f \in E^{+}(S), e \,\stackrel{*}{\mathscr{D}^{+}}\,f}\}^{\natural}$.
	
Clearly, $\eta \subseteq \stackrel{*}{\mathscr{D}^{+}}$, and each $\stackrel{*}{\mathscr{D}^{+}}$- class of $S/\eta$ contains a unique additive idempotent. Hence $S/\eta$ is a b-lattice of skew-rings and $\nu \subseteq \eta$. On the other hand, $S/\nu$ is a b-lattice of skew-rings so that $\{{(e,f) \, | \, e,f \in E^{+}(S), e \,\stackrel{*}{\mathscr{D}^{+}}\,f}\} \subseteq \nu$, which implies\\	$\{{(e,f) \, | \, e,f \in E^{+}(S), \, e \,\stackrel{*}{\mathscr{D}^{+}}\,f}\}^e \subseteq \nu$. Thus, $\nu$ = $\{{(e,f) \, | \, e,f \in E^{+}(S), e \,\stackrel{*}{\mathscr{D}^{+}}\,f}\}^{\natural}$.
\end{proof}

\begin{lemma} \label{l:3.3}
Let $S=(Y;S_{\alpha})$ be a quasi completely regular semiring and $a \in S_{\alpha} $, $b \in S_{\beta}$, where $\beta \leq \alpha$. Then,
	
(i) $(a+0_a)\,\mathscr{L^+}\,(b+a+0_a)$, $(a+0_a) \,\mathscr{R^+} \,(a+0_a+b)$,
	
(ii) $a+0_a=a+0_a+0_{(b+a+0_a)}=0_{(a+0_a+b)}+a+0_a$.
\end{lemma}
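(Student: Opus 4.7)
The plan is to first show that the three sums $a+0_a$, $b+a+0_a$, and $a+0_a+b$ all lie in the completely simple kernel $K_\alpha$, then to establish (ii) by exploiting that $0_a$ is a one-sided additive identity for $b+a+0_a$ (resp.\ $a+0_a+b$), and finally to derive (i) from (ii) together with the obvious inclusions. Since $0_a\in K_\alpha$ and $K_\alpha$ is an additive ideal of $(S_\alpha,+)$, we have $a+0_a\in S_\alpha+K_\alpha\subseteq K_\alpha$. Using $\beta+\alpha=\alpha$, we get $b+a\in S_\beta+S_\alpha\subseteq S_\alpha$, hence $b+a+0_a=(b+a)+0_a\in S_\alpha+K_\alpha\subseteq K_\alpha$. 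For $a+0_a+b$, noting that $a+0_a$ lies in the additive subgroup of $(K_\alpha,+)$ with identity $0_a$ (so that $0_a+(a+0_a)=a+0_a$), we have $a+0_a+b=0_a+(a+0_a+b)\in K_\alpha+S_\alpha\subseteq K_\alpha$.

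For (ii), the crucial observation is $(b+a+0_a)+0_a=b+a+(0_a+0_a)=b+a+0_a$, so $0_a$ is a right additive identity of $b+a+0_a$ inside $(K_\alpha,+)$. A standard property of the completely simple semigroup $(K_\alpha,+)$ (readable off its Rees matrix representation) is that if $c\in K_\alpha$ admits an idempotent $e$ as a right identity, then $e\,\mathscr{L}^{+}\,0_c$ in $K_\alpha$; hence $0_a\,\mathscr{L}^{+}\,0_{(b+a+0_a)}$. Since $E^+(K_\alpha)$ is a rectangular band, $\mathscr{L}^{+}$-equivalent idempotents $e,f$ satisfy $e+f=e$, so $0_a+0_{(b+a+0_a)}=0_a$, and combined with $(a+0_a)+0_a=a+0_a$ one computes
\begin{equation*}
(a+0_a)+0_{(b+a+0_a)}=(a+0_a)+0_a+0_{(b+a+0_a)}=(a+0_a)+0_a=a+0_a.
\end{equation*}
The dual equality $0_{(a+0_a+b)}+(a+0_a)=a+0_a$ follows symmetrically from $0_a+(a+0_a+b)=(0_a+a+0_a)+b=(a+0_a)+b=a+0_a+b$, which forces $0_a\,\mathscr{R}^{+}\,0_{(a+0_a+b)}$ and hence $0_{(a+0_a+b)}+0_a=0_a$ by the dual rectangular-band relation.

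For (i), the easy directions are $b+a+0_a=b+(a+0_a)\in S^1+(a+0_a)$ and $a+0_a+b=(a+0_a)+b\in(a+0_a)+S^1$. For the reverse direction of the $\mathscr{L}^{+}$ claim, let $(b+a+0_a)'$ denote the additive inverse of $b+a+0_a$ in its subgroup of $K_\alpha$ (existing since $K_\alpha$ is completely simple); then by (ii),
\begin{equation*}
(a+0_a)+(b+a+0_a)'+(b+a+0_a)=(a+0_a)+0_{(b+a+0_a)}=a+0_a,
\end{equation*}
witnessing $a+0_a\in S^1+(b+a+0_a)$. The $\mathscr{R}^{+}$ assertion is the mirror calculation: $(a+0_a+b)+(a+0_a+b)'+(a+0_a)=0_{(a+0_a+b)}+(a+0_a)=a+0_a$ by (ii), so $a+0_a\in(a+0_a+b)+S^1$. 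The main obstacle is the implication ``$0_a$ is a right identity of $b+a+0_a$ in $K_\alpha$'' $\Rightarrow$ ``$0_a\,\mathscr{L}^{+}\,0_{(b+a+0_a)}$''; though a standard fact about completely simple semigroups, its use also leans on the implicit fact that $a+0_a$ sits in the subgroup of $K_\alpha$ with identity $0_a$, so that the rewrites $0_a+(a+0_a)=(a+0_a)+0_a=a+0_a$ are available throughout.
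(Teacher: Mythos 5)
Your proof is correct in substance, but it takes a genuinely different route from the paper's: the paper disposes of this lemma in one line by citing Corollary II.4.3 of Petrich and Reilly, i.e.\ by treating $a+0_a$, $b+a+0_a$ and $a+0_a+b$ as completely regular elements to which the known completely-regular-semigroup fact applies, whereas you rebuild the argument from the nil-extension structure. Your preliminary reduction to the kernel $K_{\alpha}$ (using that $K_{\alpha}$ is an additive ideal of $S_{\alpha}$ and that $\beta\leq\alpha$ gives $S_{\beta}+S_{\alpha}\subseteq S_{\alpha}$, which matches the order convention the paper actually uses) is exactly the step the citation leaves implicit, and deriving (i) from (ii) via the group inverse $(b+a+0_a)'$ is clean. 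What your version buys is a self-contained proof independent of the cited corollary; what it costs is length and one slip in justification, noted below.

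The slip: you justify $0_a+0_{(b+a+0_a)}=0_a$ by asserting that $E^{+}(K_{\alpha})$ is a rectangular band. That is false for a general completely simple semiring; it is essentially what the quasi-orthodox hypothesis buys in Lemma 4.3 of the paper, and the present lemma carries no such hypothesis. Fortunately the only consequence you extract is that $\mathscr{L}^{+}$-related idempotents $e,f$ satisfy $e+f=e$ (dually, that $\mathscr{R}^{+}$-related idempotents satisfy $f+e=e$), and this holds in any semigroup with no band structure on the idempotents at all: $e\,\mathscr{L}^{+}\,f$ gives $e=x+f$ for some $x$, whence $e+f=x+f+f=x+f=e$. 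Replace the rectangular-band appeal by that one-line computation and your argument is complete.
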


\begin{proof}
Follows from Corollary II.4.3. \cite{pet}.
\end{proof}

\section{The relation $\mathscr{Y}$ on quasi-orthodox quasi completely regular semirings}

In \cite{maity2} we introduced the notion of quasi-orthodox (quasi completely regular) semirings. A semiring $(S,+, \cdot)$ is called quasi-orthodox if for any $e, f \in E^+(S)$ there exists some positive integer $n$ such that $n(e + f) = (n + 1)(e + f)$. We now define the relation $\mathscr{Y}$for semirings.

\begin{definition}
Let $S$ be a quasi completely regular semiring. We define a relation $\mathscr{Y}$ on $S$ by: for $a, b \in S$,
		
\vspace{-.3em}
\begin{center}
\begin{tabular}{ccc}
$a \, \mathscr{Y} \, b$ & if and only if & $V^{+}(a+0_a)=V^{+}(b+0_b)$.
\end{tabular}
\end{center}
\end{definition}

We have also shown in \cite{maity2} that $\mathscr{Y}$ and the least b-lattice of skew-rings congruence coincide on quasi-orthodox quasi completely regular semirings.

\begin{theorem}
\cite{maity2} Let $S = (Y; S_{\alpha})$ be a quasi completely regular semiring, where $Y$ is a b-lattice and $S_{\alpha}(\alpha \in Y)$ is a completely Archimedean semiring. Then $\mathscr{Y}$ is the least b-lattice of skew-rings congruence on $S$ if and only if $S$ is quasi-orthodox.
\end{theorem}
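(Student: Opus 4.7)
My plan is to prove the two implications separately, leaning on the characterisation of $\nu$ given in Lemma 3.2 and on the structural identities in Lemma 3.3.

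For the sufficiency direction (assume $S$ is quasi-orthodox, show $\mathscr{Y}=\nu$), I would proceed in three stages. First, I would verify that $\mathscr{Y}$ is a congruence on $(S,+,\cdot)$: reflexivity, symmetry, and transitivity are immediate from the fact that $\mathscr{Y}$ is defined by an equality of inverse-sets, multiplicative compatibility is a routine verification using the behaviour of $0_x$ under multiplication in a quasi completely regular semiring, while additive compatibility is the delicate step for which quasi-orthodoxy is essential (see below). Second, I would check that each $\mathscr{Y}$-class contains a unique additive idempotent and that $S/\mathscr{Y}$ organises into a b-lattice of skew-rings, which yields $\nu\subseteq\mathscr{Y}$ by minimality of $\nu$. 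Third, for the reverse inclusion $\mathscr{Y}\subseteq\nu$, I would observe that $a\,\mathscr{Y}\,b$ forces $a+0_a$ and $b+0_b$ to be additive idempotents sharing the same set of additive inverses and lying in a common completely simple kernel $K_\alpha$; this makes them $\stackrel{*}{\mathscr{D}^+}$-related additive idempotents, so Lemma 3.2 (via the $\natural$-closure characterisation) places $(a,b)\in\nu$.

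For the necessity direction (assume $\mathscr{Y}$ is the least b-lattice of skew-rings congruence, show $S$ is quasi-orthodox), I would fix $e,f\in E^+(S)$ and set $a=e+f$. Since $S/\nu=S/\mathscr{Y}$ is a b-lattice of skew-rings and since the additive idempotents of any b-lattice of skew-rings form a sub-b-lattice closed under $+$, the image $[a]=[e]+[f]$ is an additive idempotent in the quotient. Hence $[a]=[na]$ for every $n\geq 1$, so in particular $na\,\mathscr{Y}\,(n+1)a$. Choosing $n$ large enough that $na$ is completely regular puts both $na$ and $(n+1)a$ inside the ideal $K_\alpha$, where $x+0_x=x$; unpacking $\mathscr{Y}$ yields $V^+(na)=V^+((n+1)a)$ within the completely simple kernel. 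Using the Rees matrix description of $(K_\alpha,+)$ and the explicit form of the inverse-sets, this equality forces $na=(n+1)a$, which is the quasi-orthodox condition for the pair $e,f$.

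The main obstacle is additive compatibility of $\mathscr{Y}$ in the sufficiency direction. Given $a\,\mathscr{Y}\,b$ and $c\in S$, the four additive idempotents $0_a,0_b,0_{a+c},0_{b+c}$ are typically distinct elements of $S$, and establishing $V^+(a+c+0_{a+c})=V^+(b+c+0_{b+c})$ requires transporting all of this data into a single completely simple kernel. The quasi-orthodox hypothesis is precisely what makes some finite iterate of $0_a+0_c$ (respectively $0_b+0_c$) additively idempotent, so that Lemma 3.3 can be applied iteratively to bridge the hypothesis on $(a,b)$ to the conclusion on $(a+c,b+c)$. A secondary technical point in the necessity argument is the step "$V^+(x)=V^+(y)$ forces $x=y$ in $K_\alpha$"; this requires a normalisation of the Rees sandwich matrix but is standard once a canonical matrix representation is fixed.
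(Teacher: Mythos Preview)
The paper does not actually prove this theorem; it is quoted from \cite{maity2} with no argument given, so there is no in-paper proof to compare against. I will therefore assess your sketch on its own.

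Your Stage~3 in the sufficiency direction contains a genuine error. You write that $a\,\mathscr{Y}\,b$ ``forces $a+0_a$ and $b+0_b$ to be additive idempotents''. This is false: $a+0_a$ is the completely regular element of $K_\alpha$ attached to $a$, not an additive idempotent (in a skew-ring, for instance, $a+0_a=a$). Consequently Lemma~3.2, which is about $\stackrel{*}{\mathscr{D}^{+}}$-related \emph{idempotents}, does not apply, and your route to $\mathscr{Y}\subseteq\nu$ collapses. A correct replacement is short: if $c\in V^{+}(a+0_a)=V^{+}(b+0_b)$, then in $S/\nu$ the class $[c]$ is an additive inverse of both $[a+0_a]$ and $[b+0_b]$; but $S/\nu$ is a b-lattice of skew-rings, so additive inverses are unique in each component and hence $[a+0_a]=[b+0_b]$. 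Since $\rho\subseteq\nu$ and $a\,\rho\,(a+0_a)$, this yields $a\,\nu\,b$. (Alternatively, the Rees-matrix computation of Lemma~5.6 gives $\mathscr{Y}\subseteq\nu$ even without quasi-orthodoxy.)

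In the necessity direction your outline is sound, but the ``secondary technical point'' is misdiagnosed. The implication $V^{+}(x)=V^{+}(y)\Rightarrow x=y$ is \emph{false} in an arbitrary completely simple $K_\alpha$ (a rectangular band is a counterexample), and no normalisation of the sandwich matrix rescues it. What makes it true for $x=na$ and $y=(n+1)a$ is that these two elements are $\mathscr{H}^{+}$-related: $(n+1)a=na+a$ gives $na\,\mathscr{R}^{+}\,(n+1)a$, and $(n+1)a=a+na$ gives $na\,\mathscr{L}^{+}\,(n+1)a$. Two $\mathscr{H}^{+}$-related elements sharing an inverse in their common $\mathscr{H}^{+}$-class (a group) must coincide. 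You should make this explicit rather than invoking matrix normalisation.
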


\begin{lemma}\label{t:4.3}
{\cite{maity2}} Let $S=(Y; S_{\alpha})$ be a quasi-orthodox quasi completely regular semiring, where $Y$ is a b-lattice and $S_{\alpha}(\alpha \in Y)$ is a completely Archimedean semiring.
Then $\Big{(} E^{+}(S_{\alpha}),+ \Big{)}$ is a rectangular band for all $\alpha \in Y$ and for any two completely regular elements $a,b \in S_{\alpha},\, e \in E^{+}(S_{\beta})$, $a+b=a+e+b$,
where  $\alpha, \beta \in Y$ such that $\beta \leq \alpha$.
\end{lemma}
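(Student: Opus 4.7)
The plan is to handle the two assertions separately, reducing both to computations inside the completely simple kernel $(K_\alpha,+)$ of each Archimedean component and repeatedly exploiting the quasi-orthodoxy hypothesis.

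For the rectangular band claim, take $e,f\in E^+(S_\alpha)$. Both are additive idempotents and therefore lie in $K_\alpha$, and since $K_\alpha$ is an ideal of the additive semigroup $(S_\alpha,+)$, the sum $e+f$ again lies in $K_\alpha$, inside some group $\mathscr{H}^+$-class. Quasi-orthodoxy supplies an $n$ with $n(e+f)=(n+1)(e+f)$, and inside a group the equation $nx=(n+1)x$ forces $x$ to equal the identity; thus $e+f\in E^+(S_\alpha)$. So $E^+(S_\alpha)$ is a subband of $(K_\alpha,+)$. Writing $(K_\alpha,+)$ as a Rees matrix semigroup with structure group $G$, index sets $I,\Lambda$ and sandwich matrix $P$, the idempotents take the form $(i,p_{\lambda i}^{-1},\lambda)$; closure under addition compels $(i,p_{\lambda i}^{-1},\lambda)+(j,p_{\mu j}^{-1},\mu)=(i,p_{\mu i}^{-1},\mu)$, which is exactly the rectangular band law $xyz=xz$.

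For the absorption identity, assume $a,b\in S_\alpha$ are completely regular with additive identities $0_a,0_b\in E^+(S_\alpha)$, and $e\in E^+(S_\beta)$ with $\beta\leq\alpha$ (so $\alpha+\beta=\alpha$). Rewrite
\[
a+e+b=a+0_a+e+0_b+b=a+h+b,\qquad h:=0_a+e+0_b\in S_\alpha.
\]
The crux is to show $h=0_a+0_b$. Applying quasi-orthodoxy to the pairs $(0_a,e)$ and $(e,0_b)$ yields equations of the form $nx=(n+1)x$ inside $S_\alpha$; combined with Lemma~\ref{l:3.3} and the additive regularity of $0_a,0_b$ one argues that $0_a+e$ and $e+0_b$ already lie in the completely simple kernel $K_\alpha$, so the equations hold inside a group and force $0_a+e,\,e+0_b\in E^+(S_\alpha)$. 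Then $h$ is a sum of two elements of $E^+(S_\alpha)$, hence in $E^+(S_\alpha)$ by the closure from Part~1. The rectangular band identity gives $0_a+h+0_b=0_a+0_b$, while the left side simplifies by idempotency of $0_a,0_b$ to $0_a+e+0_b=h$. So $h=0_a+0_b$, and
\[
a+e+b=a+(0_a+0_b)+b=(a+0_a)+(0_b+b)=a+b.
\]

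The main obstacle is the middle step placing $0_a+e$ and $e+0_b$ inside $E^+(S_\alpha)$: quasi-orthodoxy alone only guarantees that some multiple is additively idempotent, and without first knowing the element belongs to the completely simple part $K_\alpha$ one cannot conclude the element itself is idempotent. This is where Lemma~\ref{l:3.3}, the additive regularity of $0_a,0_b$, and the b-lattice convention $\alpha+\beta=\alpha$ must combine to ensure $0_a+e,\,e+0_b\in K_\alpha$ before the group-law argument applies.
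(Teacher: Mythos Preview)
The paper does not give its own proof of this lemma: it is stated with a citation to \cite{maity2} and no argument is supplied. So there is nothing in the present paper to compare your proposal against directly.

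On its own merits, your strategy is sound and Part~1 is clean: quasi-orthodoxy forces $e+f$ to be idempotent inside the group $\mathscr{H}^+$-class of $K_\alpha$, and a band of idempotents inside a completely simple semigroup is automatically rectangular. Part~2 is also essentially correct, but you yourself flag the one genuine loose end---placing $0_a+e$ (and $e+0_b$) inside $K_\alpha$---and then stop short of closing it. That step \emph{is} available from the ingredients you list: Lemma~\ref{l:3.3}(i) applied with $a$ replaced by $0_a$ and $b$ replaced by $e$ gives $0_a\,\mathscr{R}^+\,(0_a+e)$; writing $0_a=(0_a+e)+s$ for some $s\in S^0$ then yields $(0_a+e)+s+(0_a+e)=0_a+(0_a+e)=0_a+e$, so $0_a+e$ is additively regular in $S_\alpha$ and hence belongs to $K_\alpha$. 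The group argument and the rectangular-band computation then go through exactly as you wrote. In short: your proof is correct once you insert this one-line regularity argument, and since the paper provides no proof at all, your approach is an acceptable self-contained justification of the cited lemma.
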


\begin{theorem}
Let $S = (Y; S_{\alpha})$ be a quasi-orthodox quasi completely regular semiring, where $Y$ is a b-lattice and $S_{\alpha}(\alpha \in Y)$ is a completely Archimedean semiring, and $a, b \in S$. Then the following conditions are equivalent:
	
(i) $a\,\mathscr{Y}\,b$.
	
(ii) There exists $e,f,g,h \in E^{+}(S)$ with $a+0_a=e+b+0_b+f$ and $b+0_b=g+a+0_a+h$.
	
(iii) $a+0_a=0_a+b+0_b+0_a$ and $b+0_b=0_b+a+0_a+0_b$.
\end{theorem}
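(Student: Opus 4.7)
The plan is to prove the cycle (iii) $\Rightarrow$ (ii) $\Rightarrow$ (iii) and (i) $\Leftrightarrow$ (iii), writing $u = a+0_a$ and $v = b+0_b$ throughout. A preliminary observation, needed for each implication, is that every one of the three conditions forces $a,b$ to lie in the same component $S_\alpha$. Under (ii) this is because the b-lattice images of the two defining identities are semilattice equations whose natural order forces $\alpha = \beta$; under (i) one takes $x = -u$, the additive inverse of $u$ in the skew-ring $R_i$ containing $a$, which lies in $V^+(u) = V^+(v)$, and then reads the b-lattice image of $v+(-u)+v = v$ to obtain $\alpha \le \beta$, with $\beta \le \alpha$ following by the symmetric choice $x = -v$. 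Once $\alpha = \beta$ is established, every auxiliary idempotent appearing in (ii) lies in some $S_\gamma$ with $\gamma \le \alpha$, so Lemma \ref{t:4.3} is available.

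The equivalence (ii) $\Leftrightarrow$ (iii) is then straightforward. The direction (iii) $\Rightarrow$ (ii) is immediate by taking $e=f=0_a$ and $g=h=0_b$. For (ii) $\Rightarrow$ (iii) I prepend and append $0_a$ to the first equation: the left side collapses to $u$ because $0_a$ is the additive identity of the skew-ring $R_i$ containing $u$, while on the right Lemma \ref{t:4.3} deletes $e$ between the completely regular elements $0_a$ and $v$ and deletes $f$ between $v$ and $0_a$, yielding $u = 0_a+v+0_a$; the partner identity follows by the same manipulation of the second equation of (ii). For (iii) $\Rightarrow$ (i), fix $x \in V^+(u)$; since $K_\alpha$ is an ideal of $S_\alpha$ under addition and $u \in K_\alpha$, we have $x = x+u+x \in K_\alpha$, so $x$ is completely regular. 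Expanding $v+x+v = (0_b+u+0_b)+x+(0_b+u+0_b)$ and using Lemma \ref{t:4.3} to delete the two interior $0_b$'s reduces the expression to $0_b+u+x+u+0_b$; the identity $u+x+u = u$ and (iii) then collapse it to $v$. A parallel computation gives $x+v+x = x$, so $V^+(u) \subseteq V^+(v)$, and the reverse inclusion is symmetric.

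The sharpest step is (i) $\Rightarrow$ (iii). Since $-u \in V^+(u) = V^+(v)$, one has $(-u)+v+(-u) = -u$; adding $u$ on the left and then on the right, and using $u+(-u) = (-u)+u = 0_a$ together with $0_a+u = u$, produces $0_a+v+0_a = u$ in two lines, and the symmetric identity follows by interchanging the roles of $a$ and $b$. The main obstacle throughout is the careful bookkeeping required to apply Lemma \ref{t:4.3}: each deletion of an idempotent must be justified by certifying that the two flanking elements are honest completely regular semiring elements of $S_\alpha$, which ultimately rests on the facts that $K_\alpha$ is precisely the set of completely regular elements of $S_\alpha$ and is an additive ideal, so that the auxiliary inverses and partial sums arising in the computation remain inside $K_\alpha$.
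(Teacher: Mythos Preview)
Your proof is correct and largely parallels the paper's argument: the implication (ii) $\Rightarrow$ (iii) via sandwiching by $0_a$ and invoking Lemma \ref{t:4.3}, and the implication (iii) $\Rightarrow$ (i) via expanding $v+x+v$ and absorbing the interior copies of $0_b$, are exactly what the paper does. The one genuine difference is how you close the cycle. The paper proves (i) $\Rightarrow$ (ii) by taking an arbitrary $x\in V^+(u)=V^+(v)$ and writing $u=(u+x)+v+(x+u)$ with the outer summands idempotent, then passes through (ii) $\Rightarrow$ (iii). You instead go directly (i) $\Rightarrow$ (iii) by choosing the \emph{specific} inverse $-u$ from the skew-ring containing $u$, so that sandwiching $(-u)+v+(-u)=-u$ by $u$ immediately collapses to $0_a+v+0_a=u$. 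This shortcut is slightly slicker: it avoids the detour through (ii) and, notably, does not invoke Lemma \ref{t:4.3} at all for this direction, so it shows that (i) $\Rightarrow$ (iii) holds even without the quasi-orthodox hypothesis. The paper's route, on the other hand, makes the existence of the idempotents in (ii) explicit without any special choice of inverse.
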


\begin{proof}
(i) $\Longrightarrow$ (ii): At first we suppose that $a\,\mathscr{Y}\, b$ for $a, b \in S$. Then,
$V^{+}(a+0_a)=V^{+}(b+0_b)$.
	
Let $x \in V^{+}(a+0_a)$. Then $x \in V^{+}(b+0_b)$, i.e., $a+0_a=a+0_a+x+a+0_a$, $x+a+0_a+x=x$ and $b+0_b=b+0_b+x+b+0_b$, $x+b+0_b+x=x$.
	
Thus,  $a+0_a = (a+0_a+x)+(b+0_b)+(x+a+0_a)$ = $e+(b+0_b)+f$, where	$e= a+0_a+x ,f= x+a+0_a  \in E^{+}(S)$. Similarly, $b+0_b=g+a+0_a+h$, for some $g, h \in E^{+}(S)$.
	
(ii)$\Longrightarrow$(iii): We have $0_a+e+b+0_b+f+0_a=0_a+(a+0_a)+0_a=a+0_a$ for some $e,f \in
E^{+}(S)$. Then $a \, \stackrel{*}{\mathscr{D}^{+}} \, b$. Let $a,b \in S_{\alpha}$, $e \in S_{\beta}$ and $f \in S_{\gamma}$. Then $\beta, \gamma \leq \alpha$.
	
Now, by Lemma \ref{t:4.3}, $0_a+e+b+0_b$ = $0_a+b+0_b$. Similarly, $b+0_b+f+0_a=b+0_b+0_a$.
	
Hence, we have, $0_a+b+0_b+0_a = a+0_a$. Similarly, $0_b+a+0_a+0_b$ = $b+0_b$.
	
(iii)$\Longrightarrow$(i): Let, $x \in V^{+}(a+0_a)$. Then, using Lemma \ref{t:4.3} we have,
	
\vspace{-.5em}
\begin{center}
\begin{tabular}{ccl}
$b+0_b$ & = & $0_b+a+0_a+0_b $\\
$ $ & = & $0_b+a+0_a+x+a+0_a+0_b$\\
$ $ & = & $(0_b+a+0_a+0_b)+x+(0_b+a+0_a+0_b)$\\
$ $ & = & $b+0_b+x+b+0_b$.
\end{tabular}
\end{center}
	
Similarly, $x+b+0_b+x=x$. Hence, $x \in V^{+}(b+0_b)$ and thus, $V^{+}(a+0_a) \subseteq V^{+}(b+0_b)$.
	
\noindent By symmetry, it follows that $V^{+}(b+0_b) \subseteq V^{+}(a+0_a)$. Thus, $a\,\mathscr{Y}\, b$. 
\end{proof}

\begin{theorem} \label{t:4.5}
Let $S=(Y; S_{\alpha})$ be a quasi-orthodox quasi completely regular semiring, where $Y$ is a b-lattice and $S_{\alpha}(\alpha \in Y)$ is a completely Archimedean semiring. Then $\stackrel{*}{\mathscr{D}^{+}} = \stackrel{*}{\mathscr{H}^{+}} \, o \, \mathscr{Y}$.
\end{theorem}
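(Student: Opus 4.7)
The plan is to establish the two inclusions $\stackrel{*}{\mathscr{H}^{+}}\,o\,\mathscr{Y}\subseteq\stackrel{*}{\mathscr{D}^{+}}$ and $\stackrel{*}{\mathscr{D}^{+}}\subseteq\stackrel{*}{\mathscr{H}^{+}}\,o\,\mathscr{Y}$ separately. The first is essentially free: Theorem 4.2 identifies $\mathscr{Y}$ with the least b-lattice of skew-rings congruence $\nu$ in the quasi-orthodox setting, Lemma 3.2 locates $\nu$ inside $\stackrel{*}{\mathscr{D}^{+}}$, and $\stackrel{*}{\mathscr{H}^{+}}\subseteq\stackrel{*}{\mathscr{D}^{+}}$ is trivial. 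Since $\stackrel{*}{\mathscr{D}^{+}}=\stackrel{*}{\mathscr{J}^{+}}$ is an equivalence by Theorem 3.1, it is transitive and absorbs the composition.

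For the second inclusion, suppose $a\stackrel{*}{\mathscr{D}^{+}}b$. Theorem 3.1 forces $a,b\in S_{\alpha}$ for a common $\alpha\in Y$, so $a+0_{a}$ and $b+0_{b}$ both lie in $K_{\alpha}$. My candidate bridge element is
\[
c \;=\; 0_{a}+(b+0_{b})+0_{a},
\]
which lies in $K_{\alpha}$ and therefore satisfies $c+0_{c}=c$. For $a\stackrel{*}{\mathscr{H}^{+}}c$, the key observation is that $0_{a}$ is a two-sided additive identity of both $a+0_{a}$ (by definition of $0_{a}$) and $c$, the latter because $0_{a}+c=c=c+0_{a}$ follows at once from $0_{a}+0_{a}=0_{a}$. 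In a completely simple semigroup, an idempotent that is a two-sided additive identity for an element lies in the $\mathscr{H}^{+}$-class of that element; hence $c$ and $a+0_{a}$ share the $\mathscr{H}^{+}$-class of $0_{a}$ in $K_{\alpha}$, and in particular $0_{c}=0_{a}$.

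For $c\,\mathscr{Y}\,b$, I would apply the characterization from the preceding theorem, which reduces the task to the two identities $c+0_{c}=0_{c}+(b+0_{b})+0_{c}$ and $b+0_{b}=0_{b}+(c+0_{c})+0_{b}$. Using $0_{c}=0_{a}$ and $c+0_{c}=c$, the first identity is just the definition of $c$. The second becomes
\[
b+0_{b}\;=\;0_{b}+0_{a}+(b+0_{b})+0_{a}+0_{b},
\]
and I would dispatch it by two applications of Lemma \ref{t:4.3}, which permit erasing each $0_{a}$ sandwiched between the completely regular elements $0_{b}$, $b+0_{b}$, $0_{b}$ of $S_{\alpha}$; the right-hand side then collapses to $0_{b}+(b+0_{b})+0_{b}=b+0_{b}$.

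The main obstacle is recognizing the correct bridge element $c$: it must simultaneously lie in the $\stackrel{*}{\mathscr{H}^{+}}$-class of $a$ and share additive inverses with $b+0_{b}$. Once $c=0_{a}+(b+0_{b})+0_{a}$ is on the table, each of the two verifications is a short computation driven by Lemma \ref{t:4.3} and the rectangular band structure of $E^{+}(S_{\alpha})$ supplied by the quasi-orthodox hypothesis.
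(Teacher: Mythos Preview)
Your proof is correct and follows essentially the same route as the paper: both use the bridge element $c=0_{a}+b+0_{b}+0_{a}$, verify $0_{c}=0_{a}$ to obtain $a\stackrel{*}{\mathscr{H}^{+}}c$, and then establish $c\,\mathscr{Y}\,b$ via condition~(iii) of Theorem~4.4 together with Lemma~\ref{t:4.3}. The only cosmetic difference is that you spell out the ``easy'' inclusion $\stackrel{*}{\mathscr{H}^{+}}\,o\,\mathscr{Y}\subseteq\stackrel{*}{\mathscr{D}^{+}}$ through Theorem~4.2 and Lemma~3.2, whereas the paper simply declares it obvious.
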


\begin{proof}
Let $a\,\stackrel{*}{\mathscr{D}^{+}}\,b$ for $a, b \in S$. Now, we have, by Lemma \ref{t:4.3},
	
\vspace{.3em}
\begin{tabular}{ccl}
$b+0_b$& = & $0_b+b+0_b+0_b $\\
$ $ & = & $0_b+(0_a+b+0_b+0_a)+0_b$\\
$ $ & = & $0_b+(0_a+b+0_b+0_a)+0_{(0_a+b+0_b+0_a)}+0_b $. 
\end{tabular}
	
\vspace{.3em}
\noindent Again,
\vspace{-.5em}	
\begin{center}
\begin{tabular}{ccl}
$(0_a+b+0_b+0_a)+0_{(0_a+b+0_b+0_a)}$ &  = & $(0_a+b+0_b+0_a)+0_a$ \\
$ $ & = & $0_a+(b+0_b)+0_a$\\
$ $ & = & $0_{(0_a+b+0_b+0_a)}+(b+0_b)+0_{(0_a+b+0_b+0_a)}$.
\end{tabular}
\end{center}
	
Hence, ${(0_a+b+0_b+0_a)}\,\mathscr{Y}\, b$.
	
\noindent Again, since $0_a= 0_{(0_a+b+0_b+0_a)}$ we must have $a\, \stackrel{*}{\mathscr{H}^{+}}\, {(0_a+b+0_b+0_a)}$.
	
\noindent Thus we have, $a\,(\stackrel{*}{\mathscr{H}^{+}} \, o \,\mathscr{Y})\,b$ and hence $\stackrel{*}{\mathscr{D}^{+}} \subseteq \stackrel{*}{\mathscr{H}^{+}} \, o \, \mathscr{Y}$. The reverse inclusion is obvious. This completes the proof. 
\end{proof}

\begin{definition}
Let $A$, $B$ be semirings and $Y$ be their common homomorphic image. Let $S = \{(a, b) \in A \times B : a \phi = b \psi\}$, where $\phi : A \rightarrow Y$ and $\psi : B \rightarrow Y$ are the semiring epimorphisms from $A$ and $B$ onto $Y$, respectively. Then $S$ is  called the spined product of semirings $A$ and $B$ with respect to $Y$, $\phi$ and $\psi$. 
\end{definition}

We highlight a very interesting result based on the congruences that we have discussed so far.

\begin{theorem}
Let $S=(I; T_{i})$ be a quasi completely regular semiring, where $I$ is an idempotent semiring and $T_{i} \, (i \in I)$ is a quasi skew-ring. Then the following conditions are equivalent:
	
(i) $S$ is quasi-orthodox,
	
(ii) $S/\rho$ is a spined product of an idempotent semiring and a b-lattice of skew-rings,
	
(iii) $S$ satisfies the identity $0_a+0_b+0_{(a+b)}=0_{(a+b)}$.
\end{theorem}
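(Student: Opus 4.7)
The plan is to prove the cycle (i) $\Rightarrow$ (iii) $\Rightarrow$ (ii) $\Rightarrow$ (i), leaning heavily on Lemma \ref{t:4.3} and the explicit description of $\rho$ recalled in Section 2 ($a\,\rho\,b$ iff $a+0_a=b+0_b$).

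For (i) $\Rightarrow$ (iii), fix $a \in S_\alpha$ and $b \in S_\beta$, so that $a+b \in S_{\alpha+\beta}$ with $\alpha+\beta \leq \alpha,\beta$ in the semilattice order on $Y$. Quasi-orthodoxy furnishes an integer $n$ with $n(0_a + 0_b)$ idempotent and hence a member of the rectangular band $E^{+}(S_{\alpha+\beta})$, which also contains $0_{(a+b)}$. Applying the rectangular-band identity $x+y+x = x$ and the sum-insertion property of Lemma \ref{t:4.3} (idempotents of lower components can be inserted freely into sums of completely regular elements), one first obtains $n(0_a+0_b) + 0_{(a+b)} = 0_{(a+b)}$ and then peels back the iterates of $0_a+0_b$ one by one to reach the identity $0_a + 0_b + 0_{(a+b)} = 0_{(a+b)}$.

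For (iii) $\Rightarrow$ (ii), work inside $T = S/\rho$, whose elements are canonically represented by the completely regular forms $a + 0_a$. Let $\tau$ be the least idempotent-semiring congruence on $T$ and let $\bar{\nu}$ denote the image of $\nu$; both project onto the common b-lattice quotient $Y$. Form the natural map $T \to T/\tau \times T/\bar\nu$ and check that its image is precisely the spined product over $Y$. The nontrivial step is injectivity: if $a\,\tau\,b$ and $a\,\bar{\nu}\,b$ then $0_a = 0_b$ and the skew-ring parts of $a+0_a, b+0_b$ agree modulo $\bar{\nu}$, and the identity (iii) is precisely what is needed to glue these two data together into the equality $a+0_a = b+0_b$.

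For (ii) $\Rightarrow$ (i), given $e,f \in E^{+}(S)$, project $e + f$ through the spined product representation of $S/\rho$. The second coordinate places the image of $e+f$ inside a b-lattice of skew-rings, where the additive idempotents form a b-lattice and so some iterate stabilises; the first coordinate (idempotent-semiring part) is stable from the outset. Pulling the resulting equality back through $\rho$ and adjusting by a completely regular correction yields $n(e+f) = (n+1)(e+f)$ in $S$, which is quasi-orthodoxy. The main technical obstacle is the middle implication, where one must identify the right idempotent-semiring congruence on $S/\rho$ and verify that (iii) supplies exactly the missing compatibility between the additive-idempotent data and the skew-ring data that upgrades a subdirect decomposition into a genuine spined product.
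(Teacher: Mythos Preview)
Your cycle differs from the paper's, which runs (i) $\Rightarrow$ (ii) $\Rightarrow$ (iii) $\Rightarrow$ (i), and the substantive gap lies in your step (iii) $\Rightarrow$ (ii). You write ``form the natural map $T \to T/\tau \times T/\bar\nu$ and check that its image is precisely the spined product,'' but this is exactly the nontrivial point, and you give no argument for it. The diagonal map into a product of two quotients only yields a \emph{subdirect} embedding; hitting the whole spined product over $Y$ amounts to the factorisation $\mathscr{D}^{+} = \tau \circ \bar\nu$ on $T=S/\rho$. In the paper this is handled in the implication (i) $\Rightarrow$ (ii): the two factors are taken concretely as $S/\stackrel{*}{\mathscr{H}^{+}}$ and $S/\mathscr{Y}$, and surjectivity onto the spined product is obtained from Theorem \ref{t:4.5} ($\stackrel{*}{\mathscr{D}^{+}} = \stackrel{*}{\mathscr{H}^{+}} \circ \mathscr{Y}$), which in turn relies on quasi-orthodoxy through Lemma \ref{t:4.3}. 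Your sketch never invokes anything playing the role of Theorem \ref{t:4.5}, so the spined-product claim is unsupported. (Your injectivity sketch is also too vague: the paper gets injectivity directly from $a\,\stackrel{*}{\mathscr{H}^{+}}\,b$ together with $a+0_a=0_a+b+0_b+0_a$, with no appeal to identity (iii).)

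Your (i) $\Rightarrow$ (iii) sketch also does not go through as written. The rectangular-band identity gives only $0_{(a+b)} + n(0_a+0_b) + 0_{(a+b)} = 0_{(a+b)}$, not the one-sided equality $n(0_a+0_b) + 0_{(a+b)} = 0_{(a+b)}$ that you assert; Lemma \ref{t:4.3} lets you insert a lower idempotent \emph{between} two completely regular elements of the same $S_\gamma$, which again produces the two-sided form, and there is no evident ``peeling'' mechanism. A correct direct route exists but uses a different idea: since $\stackrel{*}{\mathscr{H}^{+}}$ is the congruence associated to $S=(I;T_i)$, one has $0_{(a+b)} = 0_{(0_a+0_b)}$, so (iii) reduces to $(0_a+0_b)+0_{(0_a+0_b)} = 0_{(0_a+0_b)}$, which follows immediately from $n(0_a+0_b)=(n+1)(0_a+0_b)$. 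The paper avoids all of this by proving (ii) $\Rightarrow$ (iii) instead, observing that both idempotent semirings and b-lattices of skew-rings satisfy $0_x+0_y=0_{(x+y)}$, hence so does the spined product $S/\rho$, and then translating back through the description of $\rho$.
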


\begin{proof}
(i) $\Longrightarrow$ (ii) : Let $\pi_1$ (respectively, $\pi_2$) be the natural projection of $S/\stackrel{*}{\mathscr{H}^{+}}$ (respectively, $S/\mathscr{Y}$) onto $Y$. Let $A$ be the spined product of $S/\stackrel{*}{\mathscr{H}^{+}}$ and $S/\mathscr{Y}$.  Then, for any $a \in S_{\alpha}$, $\pi_1(a \stackrel{*}{\mathscr{H}^{+}})$ = $\pi_2(a\mathscr{Y}) = \alpha$.
	
\noindent We define a mapping, $\phi:S/ \rho \rightarrow A$ by $\phi(a\rho)=(a\stackrel{*}{\mathscr{H}^{+}}, a\mathscr{Y})$ for all $a \in S$.
	
\noindent We show $\phi$ is an isomorphism. Clearly, $\phi$ is a semiring homomorphism.
	
Let $a, b \in S$ such that $\phi(a \rho)$ =  $\phi(b \rho) $. This implies $(a\stackrel{*}{\mathscr{H}^{+}}, a\mathscr{Y}) $  = 	$(b\stackrel{*}{\mathscr{H}^{+}}, b\mathscr{Y})$, i.e., $a \, \stackrel{*}{\mathscr{H}^{+}} \, b$ and $a \,\mathscr{Y} \,  b$, i.e., $a, b \in T_{\alpha} $ and $ a+0_a=0_a+b+0_b+0_a, b+0_b=0_b+a+0_a+0_b$.
	
\noindent Therefore, $a+0_a$ =  $0_a+b+0_b+0_a $= $0_b+b+0_b+0_b$ =  $b+0_b$.
This implies $a\,\rho \, b$ and hence $ a\rho = b\rho$. Thus, $\phi$ is injective.
	
To show $\phi$ is surjective, let $b, c \in S$ such that $(b\stackrel{*}{\mathscr{H}^{+}},c\mathscr{Y}) \in A$. Then, $\pi_1(b\stackrel{*}{\mathscr{H}^{+}})$ = $\pi_2(c\mathscr{Y})$ = $\alpha$, say, so that $b, c \in S_{\alpha}$. Hence $b \, \stackrel{*}{\mathscr{D}^{+}} \, c$. Now, by Theorem \ref{t:4.5}, $b\,(\stackrel{*}{\mathscr{H}^{+}} o \mathscr{Y}) \, c $. This implies $ b\, \stackrel{*}{\mathscr{H}^{+}} \, a \,\mathscr{Y} \, c$ for some $a \in S$, i.e., $b \stackrel{*}{\mathscr{H}^{+}} = a\stackrel{*}{\mathscr{H}^{+}}$ and $a\mathscr{Y} = c\mathscr{Y}$.
	
Hence $\phi(a\rho)$ = $(a\stackrel{*}{\mathscr{H}^{+}}, a\mathscr{Y})$ = $(b\stackrel{*}{\mathscr{H}^{+}}, c\mathscr{Y})$, which implies that $\phi$ is surjective. Hence $\phi$ is an isomorphism.
	
\vspace{.4em} (ii)$\Longrightarrow$(iii): Let $S/\rho$ be a spined product of an idempotent semiring $I$ and a b-lattice of skew-rings $Q$. Since every idempotent semiring and every b-lattice of skew-rings satisfies the identity $0_x+0_y=0_{(x+y)}$ and therefore so does $S/\rho$.
	
Let $a,b \in S$. Then $0_{(a\rho)}+0_{(b\rho)}$ = $0_{(a+b)\rho}$, i.e., $0_a\rho+0_b\rho = 0_{(a+b)}\rho$, i.e., $(0_a+0_b)\rho = 0_{(a+b)} \rho$, i.e., $0_a+0_b+0_{(0_a+0_b)}=0_{(a+b)}+0_{(a+b)}$,
i.e., $0_a+0_b+0_{(a+b)}=0_{(a+b)}$.
	
\vspace{.4em} (iii)$\Longrightarrow$(i): If $S$ satisfies the identity $0_a+0_b+0_{(a+b)}=0_{(a+b)}$, then for any two elements $e,f \in E^{+}(S)$, $0_e+0_f+0_{(e+f)} = 0_{(e+f)}$, i.e., $e+f+0_{(e+f)}=0_{(e+f)}$.
	
Let $n(e+f)$ be $(e+f)$-additively regular. Then, $(e+f)+0_{(e+f)}+n(e+f)=0_{(e+f)}+n(e+f)$, i.e., 
$(n+1)(e+f)=n(e+f)$. Hence $S$ is quasi-orthodox. 
\end{proof}

\begin{cor}
Let $S$ be a quasi-orthodox quasi completely regular semiring. Then we have, $\stackrel{*}{\mathscr{H}^{+}}\cap \mathscr{Y}$ = $\rho$. 
\end{cor}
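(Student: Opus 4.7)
The plan is to derive $\stackrel{*}{\mathscr{H}^{+}} \cap \mathscr{Y} = \rho$ as an essentially immediate consequence of the preceding theorem. Its proof of (i)$\Longrightarrow$(ii) already exhibits an isomorphism $\phi : S/\rho \to A$ given by $\phi(a\rho) = (a\stackrel{*}{\mathscr{H}^{+}}, a\mathscr{Y})$, where $A$ is the spined product of $S/\stackrel{*}{\mathscr{H}^{+}}$ and $S/\mathscr{Y}$. The well-definedness of $\phi$ is exactly the inclusion $\rho \subseteq \stackrel{*}{\mathscr{H}^{+}} \cap \mathscr{Y}$, while the injectivity of $\phi$ is exactly the reverse inclusion. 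So the corollary is precisely the kernel computation for the homomorphism $a \mapsto (a\stackrel{*}{\mathscr{H}^{+}}, a\mathscr{Y})$ into $S/\stackrel{*}{\mathscr{H}^{+}} \times S/\mathscr{Y}$.

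For a self-contained argument, I would separate the two inclusions. The inclusion $\rho \subseteq \stackrel{*}{\mathscr{H}^{+}} \cap \mathscr{Y}$ is the easy direction: $a\,\rho\,b$ gives $a+0_a = b+0_b$, whence $V^{+}(a+0_a) = V^{+}(b+0_b)$ (so $a\,\mathscr{Y}\,b$); and the same equality places the additively regular reducts $pa, qb$ in a common $\mathscr{H}^{+}$-class inside the completely simple kernel $K_\alpha$ containing them, giving $a\,\stackrel{*}{\mathscr{H}^{+}}\,b$.

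For the reverse inclusion, suppose $a\,\stackrel{*}{\mathscr{H}^{+}}\,b$ and $a\,\mathscr{Y}\,b$. The $\stackrel{*}{\mathscr{H}^{+}}$-relation places $a$ and $b$ in a common $\stackrel{*}{\mathscr{D}^{+}}$-class and, more importantly, identifies their unique additive idempotents, yielding $0_a = 0_b$. Condition (iii) of Theorem 4.4 applied to the $\mathscr{Y}$-pair gives $a+0_a = 0_a + b + 0_b + 0_a$; substituting $0_a = 0_b$ and collapsing idempotents in the common quasi skew-ring reduces this to $a+0_a = b+0_b$, i.e., $a\,\rho\,b$. The one step that requires any thought is the equality $0_a = 0_b$; this is where the quasi-orthodox hypothesis, through Lemma \ref{t:4.3} and Theorem \ref{t:4.5}, does its real work by ensuring that a $\stackrel{*}{\mathscr{H}^{+}}$-class sits inside a single quasi skew-ring whose additive identity is the common $0$. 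Everything after that is a one-line manipulation.
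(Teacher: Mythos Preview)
Your proposal is correct and mirrors the paper exactly: the corollary appears without proof as an immediate consequence of Theorem~4.7, and your reading of it as the kernel computation for $\phi$ (well-definedness giving one inclusion, injectivity the other) is precisely the intended derivation, with your self-contained argument matching the injectivity step in the theorem's proof almost verbatim. One minor correction to your closing commentary: the equality $0_a = 0_b$ already follows from $a\,\stackrel{*}{\mathscr{H}^{+}}\,b$ in \emph{any} quasi completely regular semiring (the additively regular reducts $pa$ and $qb$ lie in the same additive $\mathscr{H}^{+}$-class and hence share their group identity), so the quasi-orthodox hypothesis is not doing work there but rather in Theorem~4.4, which is what supplies the identity $a+0_a = 0_a + b + 0_b + 0_a$ that you then simplify.
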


\section{The relation $\mathscr{Y}$ on quasi completely regular semirings and the interval which $\mathscr{Y^*}$ belongs to}

So far we have discussed the nature and properties of the relation $\mathscr{Y}$ on a special
kind of quasi completely regular semirings. In the following section, we try to describe $\mathscr{Y^*}$ on quasi completely regular semirings without any other special conditions.

\begin{definition}
\cite{sen1} \, Let $R$ be a skew-ring, $(I,\cdot)$ and $(\Lambda, \cdot)$ are bands such that $I \cap \Lambda = \{o\}$ and $P =(p_{\lambda,i} )$ be a matrix over $R$, $i \in I, \lambda \in
\Lambda$ under the assumptions
	
\vspace{-1em}
\begin{center}
\begin{tabular}{cl}
(i) & $p_{\lambda,o}$  =  $p_{o,i} $  =  $0$,\\
(ii) & $p_{\lambda \mu , kj}$ = $p_{\lambda \mu , ij}-p_{\nu \mu , ij} + p_{\nu \mu , kj}$,\\
(iii) & $p_{\mu \lambda , jk}$ = $p_{\mu \lambda  , ji}-p_{\mu \nu , ji} + p_{\mu \nu , jk}$,\\
(iv) & $ap_{\lambda,i}$  =  $p_{\lambda,i}a $  =  $0$,\\
(v) & $ab + p_{o \mu, io}$ = $p_{o \mu , io} + ab$,\\
(vi) & $ab + p_{\lambda o , oj}$ = $p_{\lambda o , oj} + ab$, for all $i,j,k \in I, \lambda,\mu, \nu\in \Lambda$
and $a,b \in R$.
\end{tabular}
\end{center}
	
On $S = I\times R \times\Lambda$, we define `$+$' and `$\cdot$' by
	
\begin{center}
$(i,a,\lambda)+(j,b,\mu)=(i,a+p_{\lambda,j}+b,\mu)$
\end{center}
\vspace{-.5em} and \hspace{8.6em} $(i,a,\lambda)\cdot
(j,b,\mu)=(ij,-p_{\lambda\mu ,ij}+ab,\lambda \mu)$.

Then $(S,+,\cdot)$ is a semiring which is called a Rees matrix semiring and is denoted by $\mathscr{M}(I, R, \Lambda; P)$.

The authors \cite{sen1} proved that a semiring $S$ is a completely simple semiring if and only if $S$ is isomorphic to a Rees matrix semiring.
\end{definition}

Next, we give a description of least skew-ring congruence to determine the interval of $\mathscr{Y^*}$ on quasi completely regular semirings.

\begin{lemma}\label{l:5.2}
Let $S=(Y; S_{\alpha})$ be a quasi completely regular semiring, where $Y$ is a b-lattice and $S_{\alpha}$ $(\alpha \in Y)$ is a completely Archimedean semiring and $a,b \in S_{\alpha}$. Then the following statements are equivalent.
	
(i) $a \, \mathscr{Y} \, b$,
	
(ii) $a+0_a=e+b+0_b+f$ and $b+0_b=g+a+0_a+h$ for any $e, f, g, h \in E^{+}(S)$ with $e\,\stackrel{*}{\mathscr{R}^{+}} \, a \, \stackrel{*}{\mathscr{L}^{+}} \,f$ and $g\, \stackrel{*}{\mathscr{R}^{+}} \,b \, \stackrel{*}{\mathscr{L}^{+}} \, h$,
	
(iii) $a+0_a=0_{(a+0_a+x)}+b+0_b+0_{(x+a+0_a)}$ and $b+0_b=0_{(b+0_b+x)}+a+0_a+0_{(x+b+0_b)}$
for any $x \in S_{\alpha}$.
\end{lemma}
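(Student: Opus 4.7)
The plan is to prove the cycle (ii) $\Longrightarrow$ (iii) $\Longrightarrow$ (i) $\Longrightarrow$ (ii), all of it inside the completely simple kernel $K_{\alpha}$ of $S_{\alpha}$, where $a+0_{a}$ and $b+0_{b}$ reside. A preliminary observation that I will use throughout is that $pa$ and $a+0_{a}$ lie in the same group $\mathscr{H}^{+}$-class (inside the quasi skew-ring containing $a$), so that for any additive idempotent $e$ the relation $e\,\stackrel{*}{\mathscr{R}^{+}}\,a$ is equivalent to $e\,\mathscr{R}^{+}\,(a+0_{a})$, and analogously for $\mathscr{L}^{+}$.

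For (ii) $\Longrightarrow$ (iii), given $x \in S_{\alpha}$ I specialise the idempotents to $e = 0_{(a+0_{a}+x)}$, $f = 0_{(x+a+0_{a})}$, $g = 0_{(b+0_{b}+x)}$, $h = 0_{(x+b+0_{b})}$. Lemma~\ref{l:3.3}(i) applied with $\beta = \alpha$ yields $(a+0_{a})\,\mathscr{R}^{+}\,(a+0_{a}+x)$ and $(a+0_{a})\,\mathscr{L}^{+}\,(x+a+0_{a})$; since each of these elements is $\mathscr{H}^{+}$-related to its own $0$-idempotent, $e$ and $f$ inherit these relations with $a+0_{a}$, and the preliminary observation converts them into $e\,\stackrel{*}{\mathscr{R}^{+}}\,a$ and $f\,\stackrel{*}{\mathscr{L}^{+}}\,a$. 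The analogue for $g,h,b$ is identical, and (iii) reads as (ii) with these canonical witnesses.

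For (i) $\Longrightarrow$ (ii), fix idempotents $e,f$ with $e\,\stackrel{*}{\mathscr{R}^{+}}\,a\,\stackrel{*}{\mathscr{L}^{+}}\,f$. Standard Rees-matrix coordinate matching inside the completely simple semigroup $K_{\alpha}$ produces an element $z \in V^{+}(a+0_{a})$ such that $(a+0_{a})+z = e$ and $z+(a+0_{a}) = f$. By hypothesis (i) the same $z$ lies in $V^{+}(b+0_{b})$, so
\begin{center}
$e + (b+0_{b}) + f = (a+0_{a}) + [\,z + (b+0_{b}) + z\,] + (a+0_{a}) = (a+0_{a}) + z + (a+0_{a}) = a + 0_{a}$.
\end{center}
The equation $b+0_{b} = g + (a+0_{a}) + h$ is obtained by the symmetric argument.

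For (iii) $\Longrightarrow$ (i), take any $y \in V^{+}(a+0_{a})$; the goal is $y \in V^{+}(b+0_{b})$. Applying (iii) with $x = y$ gives $b+0_{b} = 0_{(b+0_{b}+y)} + (a+0_{a}) + 0_{(y+b+0_{b})}$. A second application of Lemma~\ref{l:3.3}(i), now with $a$ replaced by $y$ and $b$ by $b+0_{b}$, shows that $y+b+0_{b}\,\mathscr{R}^{+}\,y$ and $b+0_{b}+y\,\mathscr{L}^{+}\,y$, so the attached $0$-idempotents satisfy the absorptions $0_{(y+b+0_{b})} + y = y$ and $y + 0_{(b+0_{b}+y)} = y$. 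Substituting the formula for $b+0_{b}$ into $(b+0_{b})+y+(b+0_{b})$, collapsing the central $y$'s through these absorptions, and finally invoking $(a+0_{a})+y+(a+0_{a}) = a+0_{a}$ yields $(b+0_{b})+y+(b+0_{b}) = b+0_{b}$; an analogous (shorter) reduction gives $y+(b+0_{b})+y = y$. Hence $V^{+}(a+0_{a}) \subseteq V^{+}(b+0_{b})$, and the symmetric inclusion completes the proof. The principal delicate point is the coordinate-level bookkeeping that pins down the $\mathscr{R}^{+}/\mathscr{L}^{+}$ positions of the various $0_{(\cdot)}$-idempotents and confirms the collapse of $\stackrel{*}{\mathscr{R}^{+}}$ onto $\mathscr{R}^{+}$-to-$(a+0_{a})$; once these structural facts are recorded, each implication reduces to one short associative rearrangement.
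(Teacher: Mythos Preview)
Your proof is correct and follows the same cycle of implications as the paper, with the same underlying ideas in each step. The execution differs mainly in style: the paper carries out (i)$\Rightarrow$(ii) by explicit Rees matrix coordinates---writing $a+0_a=(i,s,\lambda)$, exhibiting the inverse $c=(k,-p_{\lambda,k}-s-p_{\delta,i},\delta)$, solving for the middle coordinate of $b+0_b$, and then computing $e+b+0_b+f$ componentwise---whereas you invoke the standard completely-simple fact that for prescribed $e\,\mathscr{R}^{+}\,(a+0_a)\,\mathscr{L}^{+}\,f$ there is an inverse $z$ with $(a+0_a)+z=e$, $z+(a+0_a)=f$, and then finish in one associative line. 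Your (iii)$\Rightarrow$(i) is likewise a bit leaner: the paper substitutes (iii) twice (once for $b+0_b$, then again for each inserted copy of $a+0_a$) and appeals to Lemma~\ref{l:3.3} together with Lemma~\ref{t:4.3} to cancel the extra idempotents, while you substitute once and use only the $\mathscr{R}^{+}/\mathscr{L}^{+}$ absorption rules $e+y=y$, $y+f=y$ for idempotents $e,f$ in the appropriate Green's classes of $y$. Both routes rest on the same structural facts about $K_{\alpha}$; yours trades the concreteness of the coordinate check for a shorter, coordinate-free argument.
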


\begin{proof}
(i) $\Longrightarrow$ (ii): Let $S_{\alpha}$ be nil-extension of a completely simple semiring $K_{\alpha}$. Let $a+0_a=(i,s,\lambda)$, $b+0_b=(j,t,\mu) \in K_{\alpha}$ such that $a\,\mathscr{Y}\,b$, and $e=(i,-p_{\delta,i},\delta)$, $f=(k,-p_{\lambda,k},\lambda) \in E^{+}(S_{\alpha})$ with $e\, \stackrel{*}{\mathscr{R}^{+}}\,a\, \stackrel{*}{\mathscr{L}^{+}} \,f$.
	
\noindent Let $c=(k,-p_{\lambda,k}-s-p_{\delta,i},\delta) \in K_{\alpha}$.
	
\noindent Then	
\begin{center}
\begin{tabular}{ccl}
$a+0_a+c+a+0_a$ & = & $(i,s,\lambda)+(k,-p_{\lambda,k}-s-p_{\delta,i},\delta)+(i,s,\lambda) $\\
$ $ & = & $(i,s+p_{\lambda,k}-p_{\lambda,k}-s-p_{\delta,i}+p_{\delta,i}+s,\lambda)$\\
$ $ & = & $(i,s,\lambda)$\\
$ $ & = & $ a+0_a$.
\end{tabular}
\end{center}
	
Since, $K_{\alpha}$ is a completely simple semiring, we have $c+a+0_a+c=c$. This implies, $c \in V^{+}(a+0_a)$.
	
Since $a\,\mathscr{Y}\,b$, we have $c \in V^{+}(b+0_b)$. Hence $b+0_b+c+b+0_b=b+0_b$,
	
\vspace{-1em}
\begin{center}
\begin{tabular}{ccl}
$(j,t,\mu)+(k,-p_{\lambda, k}-s-p_{\delta,i},\delta)+(j,t,\mu)$ & = &
$(j,t+p_{\mu,k}-p_{\lambda, k}-s-p_{\delta,i}+p_{\delta,j}+t,\mu )$\\
$ $ & = & $(j,t,\mu)$.
\end{tabular}
\end{center}
	
So we get, $t=-p_{\delta,j}+p_{\delta,i}+s+p_{\lambda, k}-p_{\mu,k}$.
	
\noindent Then
\vspace{-.5em}
\begin{center}
\begin{tabular}{ccl}
$e+b+0_b+f$ & = & $(i,-p_{\delta,i},\delta)+(j,-p_{\delta,j}+p_{\delta,i}+s+p_{\lambda, k}-p_{\mu,k},\mu) +(k,-p_{\lambda,k},\lambda)  $\\
$ $ & = & $(i,-p_{\delta,i}+p_{\delta,j}-p_{\delta,j}+p_{\delta,i}+s+p_{\lambda, k}-p_{\mu,k}+p_{\mu,k}-p_{\lambda,k},\lambda)$\\
$ $ & = & $(i,s, \lambda)$\\
$ $ & = & $a+0_a$.
\end{tabular}
\end{center}
	
Similarly, we can prove for any $g,h \in E^{+}(S_{\alpha})$ with $g\, \stackrel{*}{\mathscr{R}^{+}}\,b\, \stackrel{*}{\mathscr{L}^{+}} \,h$, $b+0_b=g+a+0_a+h$.
	
(ii) $\Longrightarrow$ (iii): For $x,a,b \in S_{\alpha}$, by Lemma \ref{l:3.3} (i), we have $0_{(a+0_a+x)}\, \mathscr{R^{+}}\, (a+0_a)\,\mathscr{L^{+}}\, 0_{(x+a+0_a)}$ and $0_{(b+0_b+x)}\, \mathscr{R^{+}}\, (b+0_b)\,\mathscr{L^{+}}\, 0_{(x+b+0_b)}$. This implies $0_{(a+0_a+x)}\, \stackrel{*}{\mathscr{R}^{+}}\, a \,\stackrel{*}{\mathscr{L}^{+}}\,0_{(x+a+0_a)}$ and $0_{(b+0_b+x)} \, \stackrel{*}{\mathscr{R}^{+}}\, b\, \stackrel{*}{\mathscr{L}^{+}}\, 0_{(x+b+0_b)}$. Hence by
(ii), $a+0_a=0_{(a+0_a+x)}+b+0_b+0_{(x+a+0_a)}$ and $b+0_b=0_{(b+0_b+x)}+a+0_a+0_{(x+b+0_b)}$.
	
(iii) $\Longrightarrow$ (i): Let $c \in V^{+}(a+0_a)$ for $a, b \in S_{\alpha}$. Then, $c \in S_{\alpha}$, $(a+0_a+c), (c+a+0_a) \in E^{+}(S_{\alpha})$. By (iii),
	
\vspace{-.5em}
\begin{center}
\begin{tabular}{ccl}
$b+0_b$& = & $0_{(b+0_b+c)}+a+0_a+0_{(c+b+0_b)}$\\
$ $ & = & $0_{(b+0_b+c)}+a+0_a+c+a+0_a+0_{(c+b+0_b)}$\\
$ $ & = & $0_{(b+0_b+c)}+0_{(a+0_a+c)}+b+0_b+0_{(c+a+0_a)}+c+$\\
$ $ & $ $ & $0_{(a+0_a+c)}+b+0_b+0_{(c+a+0_a)}+0_{(c+b+0_b)}$\\
$ $ & = & $0_{(b+0_b+c)}+0_{(a+0_a+c)}+b+0_b+(c+a+0_a)+c+$\\
$ $ & $ $ & $(a+0_a+c)+b+0_b+0_{(c+a+0_a)}+0_{(c+b+0_b)}$\\
$ $ & = & $ 0_{(b+0_b+c)}+0_{(a+0_a+c)}+b+0_b+c+b+0_b+0_{(c+a+0_a)}+0_{(c+b+0_b)}$\\
$ $ & = & $ 0_{(b+0_b+c)}+b+0_b+c+b+0_b+0_{(c+b+0_b)}$ \, \, [by Lemma \ref{l:3.3} and Lemma \ref{t:4.3}]\\
$ $ & = & $b+0_b+c+b+0_b$.
\end{tabular}
\end{center}
	
This implies $c \in V^{+}(b+0_b)$ and hence $V^{+}(a+0_a) \subseteq V^{+}(b+0_b)$.
By symmetry, we get $V^{+}(a+0_a) = V^{+}(b+0_b)$. This completes the proof.
\end{proof}

\begin{definition}
\cite{sen1} Let $R$ be a skew-ring. A normal subgroup $N$ of $(R,+)$ is said to be a skew-ideal of $R$ if $a \in N$ implies $ca,ac \in N$ for all $c \in R$.
\end{definition}

\begin{notation}
Let $S=\mathscr{M}(I,R,\Lambda; P)$ be a Rees matrix semiring over a skew-ring $R$. Let $\langle P \rangle$ denote the smallest skew-ideal of $R$ generated by the elements of $P$.
\end{notation}

\begin{lemma} \label{l:5.5}
Let $S$ be a completely Archimedean semiring which is nil-extension of a completely simple semiring $K = \mathscr{M}(I, R, \Lambda;P)$. Define a relation $\sigma$ on $S$ as : for all $a,b \in S$,
	
\vspace{-1em}
\begin{center}
\begin{tabular}{ccl}
$a\, \sigma \, b$ & if and only if & $(g-h) \in \langle P \rangle $,
\end{tabular}
\end{center}
	
\vspace{-.5em}
\noindent where $a+0_a=(i,g,\lambda)$, $b+0_b=(j,h,\mu) \in K$. Then $\sigma$ is the least skew-ring congruence on $S$.
\end{lemma}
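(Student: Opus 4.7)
The plan is to verify, in order, that (a)~$\sigma$ is an equivalence relation, (b)~$\sigma$ is a semiring congruence, (c)~$S/\sigma$ is a skew-ring, and (d)~any skew-ring congruence $\tau$ on $S$ satisfies $\sigma \subseteq \tau$. Step~(a) is routine: $a + 0_a$ is uniquely determined by $a$, and $\langle P \rangle$, being a normal subgroup of $(R, +)$, yields reflexivity, symmetry, and transitivity for free.

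For step~(b), the key is to compute the Rees matrix form of $(a+c)+0_{(a+c)}$ and $(ac)+0_{(ac)}$. Since $K$ is an additive ideal of $S$, both of these lie in $K$, so using the Rees matrix formulas $(i,g,\lambda)+(j,h,\mu)=(i,g+p_{\lambda,j}+h,\mu)$ and $(i,g,\lambda)\cdot(j,h,\mu)=(ij,-p_{\lambda\mu,ij}+gh,\lambda\mu)$, together with Lemma~\ref{l:3.3} to rewrite $a+0_a$ using $0_{(a+0_a+c)}$ and $0_{(c+a+0_a)}$, the whole verification reduces to a calculation in $K$. Given $a\,\sigma\,b$ and $c\,\sigma\,d$, the difference of the middle coordinates is then seen to lie in $\langle P \rangle$ by invoking the axioms for $P$: (ii) and (iii) reduce differences of $p$-entries to members of $\langle P\rangle$, while (iv)--(vi) guarantee that $\langle P\rangle$ absorbs the multiplicative interactions arising from the Rees product.

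For step~(c), I would check that the $\sigma$-class of any additive idempotent is the additive identity of $S/\sigma$. The idempotents of $K$ have the form $(i,-p_{\lambda,i},\lambda)$, and any two of them differ in the middle coordinate by an element of $\langle P\rangle$, so they all collapse to a single $\sigma$-class. Additive inverses in $S/\sigma$ are then supplied by any element of $V^{+}(a+0_a)$, which exists because $a+0_a \in K$ and $(K,+)$ is a completely simple semigroup.

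Step~(d) is the main obstacle. Given a skew-ring congruence $\tau$, the group $(S/\tau,+)$ has a unique additive identity, so all additive idempotents are $\tau$-identified and the map $a\mapsto a+0_a$ descends to the identity modulo $\tau$; hence the question reduces to a calculation in $K/\tau$. To deduce $\sigma\subseteq\tau$ I must show that every element of $\langle P\rangle$ collapses to $0$ modulo $\tau$. The idea is to exhibit each generator $p_{\lambda,i}$ as (a coordinate of) a difference of additive idempotents in $K$, and then extend via the skew-ideal axioms: closure of $\langle P\rangle$ under $\pm$ is automatic, and closure under left/right multiplication by elements of $R$ follows because $\tau$ is a semiring congruence, so the induced multiplication on $K/\tau$ forces these products into the kernel. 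The delicate part is tracking the noncommutativity of $(R,+)$ through identities (ii), (iii), (v), (vi) so that the "difference of idempotents" presentation of $p_{\lambda,i}$ survives the passage to $K/\tau$; once that is done, $g-h \in \langle P\rangle$ will imply $(i,g,\lambda)\,\tau\,(j,h,\mu)$, completing the proof.
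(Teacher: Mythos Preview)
Your four-step outline (equivalence, congruence, skew-ring quotient, minimality) matches the paper's structure exactly, and steps (a)--(c) are handled in essentially the same way. Two points of comparison are worth noting.

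In step~(b) the paper does not go through Lemma~\ref{l:3.3}; instead it uses the identity $(a+c)+0_{(a+c)}=(a+0_a)+(c+0_c)$ (and likewise $(ac)+0_{ac}=(a+0_a)(c+0_c)$), which holds in any completely Archimedean semiring because $a\mapsto a+0_a$ is the $\rho$-retraction onto $K$. This reduces compatibility immediately to a Rees-matrix computation in $K$, and the check that $g+p_{\lambda,k}-p_{\mu,k}-h\in\langle P\rangle$ (and its multiplicative analogue) then uses only normality of $\langle P\rangle$ and closure under multiplication, not axioms (ii)--(vi) as you suggest.

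In step~(d) the paper does not carry out your direct argument. It observes that once one replaces $a,b$ by $a+0_a,\,b+0_b\in K$, the question concerns only the additive reduct $(K,+)$, which is a completely simple semigroup, and then invokes Lemma~2.3 of \cite{guo} (the semigroup version of the present lemma) to conclude $(a+0_a)\,\xi\,(b+0_b)$. Your self-contained route is also correct: writing $(i,g,\lambda)=(i,0,o)+(o,g,o)+(o,0,\lambda)$ with the outer summands idempotent, and noting $(o,p_{\lambda,i},o)=(o,0,\lambda)+(i,0,o)$ is a sum of idempotents, shows that the embedding $g\mapsto(o,g,o)$ sends $\langle P\rangle$ into the $\tau$-class of the identity, which is exactly what you need. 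The paper's citation is shorter; your argument is more self-contained.
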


\begin{proof}
The relation $\sigma$ is obviously reflexive and symmetric.
	
Let $a\, \sigma \, b$ and $b\, \sigma \, c$ where $a,b,c \in S$. Also, let $a+0_a=(i,g,\lambda)$, $b+0_b=(j,h,\mu) $ and $c+0_c=(k,t,\delta) \in K$. Then $(g-h) \in \langle P \rangle$ and $(h-t) \in \langle P \rangle$. This implies $(g-t) \in \langle P \rangle$. Hence $a\,\sigma \, c$. Thus, $\sigma$ is transitive and hence $\sigma$ is an equivalence relation on $S$.

Next we prove $\sigma$ is compatible. Let $a,b \in S$ such that $a\, \sigma \, b$. Then we have, $(g-h) \in \langle P \rangle $, where $a+0_a=(i,g,\lambda)$, $b+0_b=(j,h,\mu)  \in K$. Now, for any $c \in S$, let $c+0_c= (k,t,\delta) \in K$.
	
Since $S$ is completely Archimedean, one can easily show that $(a+c)+0_{(a+c)}=(a+0_a)+(c+0_c) =
(i,g,\lambda)+(k,t,\delta)=(i,g+p_{\lambda, k}+t,\delta)$. Similarly, $(b+c)+0_{(b+c)}=(b+0_b)+(c+0_c) = (j,h,\mu)+(k,t,\delta)=(j,h+p_{\mu, k}+t,\delta)$.
	
Now, $(g+p_{\lambda,k}+t)-(h+p_{\mu, k}+t)$ = $g+p_{\lambda,k}-p_{\mu,k}-h$.
	
We have, $(g-h) \in \langle P \rangle $ implies $-h+g \in \langle P \rangle $, i.e., $p_{\lambda,k}-p_{\mu,k}-h+g+p_{\mu,k}-p_{\lambda,k} \in \langle P \rangle $,
	
\noindent i.e., $g+p_{\lambda,k}-p_{\mu,k}-h+g+p_{\mu,k}-p_{\lambda,k}-g \in \langle P \rangle $.
Also, $g+p_{\mu,k}-p_{\lambda,k}-g \in \langle P \rangle $. Thus, $g+p_{\lambda,k}-p_{\mu,k}-h  \in \langle P \rangle$. Hence, $(a+c) \, \sigma \, (b+c)$. Similarly, it can be shown that $(c+a) \, \sigma \, (c+b)$.
	
Again, $ac+0_{ac}$ = $(a+0_a)(c+0_c)$ = $(i,g,\lambda)(k,t,\delta)$ = $(ik,-p_{\lambda \delta,ik}+gt,\lambda \delta)$. Similarly, $bc+0_{bc}$ = $(b+0_b)(c+0_c)$ = $(j,h,\mu)(k,t,\delta)$ = $(jk,-p_{\mu \delta,jk}+ht,\mu \delta)$. Now, $(g-h) \in \langle P \rangle$ implies $(gt-ht) \in \langle P \rangle$, i.e., $-p_{\lambda \delta,ik}+gt-ht+p_{\lambda \delta,ik} \in \langle P \rangle$,
i.e., $-p_{\lambda \delta,ik}+gt-ht+p_{\mu \delta,jk}-p_{\mu \delta,jk}+p_{\lambda \delta,ik} \in \langle P \rangle$. Since, $-p_{\mu \delta,jk}+p_{\lambda \delta,ik} \in \langle P \rangle$ it follows that $-p_{\lambda \delta,ik}+gt-ht+p_{\mu \delta,jk} \in \langle P \rangle$.
Therefore, $(ac)\, \sigma \,(bc)$. Similarly, $(ca)\, \sigma \,(cb)$. Consequently, $\sigma$
is a congruence on $(S,+,\cdot)$.
	
Next we show that $\sigma$ is a skew-ring congruence on $S$. If we can show that there is an unique additive idempotent in $S/\sigma$, then we are done. For this it is enough to prove that all additive idempotents of $S$ are $\sigma$ related.
	
Let $e, f   \in E^{+}(S)$. Then $e = (i, -p_{\lambda,i},\lambda)$ and $f=(j,-p_{\mu,j}, \mu)$. Now,  $-p_{\lambda,i}+p_{\mu,j} \in \langle P \rangle$ implies that $e\, \sigma \, f$. This proves
that $\sigma$ is a skew-ring congruence on $S$.
	
At last, we prove that $\sigma$ is the least skew-ring congruence on $S$. Let $\xi$ be any skew-ring congruence on $S$ and $a,b \in S$ such that $a\,\sigma \, b$. Let $a+0_a$ = $(i,g,\lambda)$ and
$b+0_b$ = $(j,h,\mu)$ $\in K$. Then $(a+0_a) \, \sigma\, (b+0_b)$ and hence by Lemma 2.3. \cite{guo}, it is easy to show that $(a+0_a)\,\xi\, (b+0_b)$. Now, $a\,\xi\,(a+0_a)\,\xi\,(b+0_b)\,\xi\,b$, i.e., $a\,\xi\,b$. Thus, it follows that $\sigma \subseteq \xi$. Consequently, $\sigma$ is
the least skew-ring congruence on $S$. This completes the proof.
\end{proof}

\begin{lemma} \label{l:5.6}
Let $S=(Y; S_{\alpha})$ be a quasi completely regular semiring where $Y$ is a b-lattice and $S_{\alpha} \, (\alpha \in Y)$ is a completely Archimedean semiring. If $\nu$ is the least b-lattice
of skew-rings congruence on $S$, then $\mathscr{Y^*} \subseteq \nu$.
	\end{lemma}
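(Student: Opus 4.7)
Since $\nu$ is a congruence and $\mathscr{Y^*}$ is by definition the least congruence on $S$ containing $\mathscr{Y}$, the target inclusion $\mathscr{Y^*} \subseteq \nu$ reduces at once to proving $\mathscr{Y} \subseteq \nu$. So my plan is to fix $a, b \in S$ with $a\,\mathscr{Y}\,b$ (equivalently, $V^{+}(a+0_a) = V^{+}(b+0_b)$) and deduce $a\,\nu\,b$ by analysing the images of these elements in the b-lattice of skew-rings $S/\nu$.

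I would pick any $x \in V^{+}(a+0_a) = V^{+}(b+0_b)$; this set is nonempty because $a + 0_a$ lies in the completely simple kernel $K_{\alpha}$ of the containing $S_{\alpha}$ and is therefore additively regular. Projecting the four inverse identities to $S/\nu$ makes $\bar{x}$ an additive inverse of both $\overline{a+0_a}$ and $\overline{b+0_b}$. The key step is to trap all three classes inside one skew-ring component of $S/\nu$: writing $\alpha', \beta'$ for the images of $\overline{a+0_a}$ and $\bar{x}$ in the semilattice $(Y',+)$ of additive idempotents of $S/\nu$, the identities $\overline{a+0_a} + \bar{x} + \overline{a+0_a} = \overline{a+0_a}$ and $\bar{x} + \overline{a+0_a} + \bar{x} = \bar{x}$ project to $\alpha' + \beta' = \alpha'$ and $\beta' + \alpha' = \beta'$, forcing $\alpha' = \beta'$ by antisymmetry of the semilattice order; the same argument locates $\overline{b+0_b}$ in that component. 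Since each skew-ring component of $S/\nu$ is a group under addition, additive inverses are unique there, so $\overline{a+0_a} = -\bar{x} = \overline{b+0_b}$ in $S/\nu$.

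To upgrade this to $\bar{a} = \bar{b}$, I would observe that every b-lattice of skew-rings is a completely regular semiring, so the least completely regular semiring congruence satisfies $\rho \subseteq \nu$; combined with the characterisation of $\rho$ recalled in the preliminaries (together with $0_{(a+0_a)} = 0_a$, which gives $a\,\rho\,(a+0_a)$ and likewise for $b$), this yields $\bar{a} = \overline{a+0_a} = \overline{b+0_b} = \bar{b}$ in $S/\nu$, i.e.\ $a\,\nu\,b$. The main obstacle I anticipate is precisely the middle step, namely pushing the inverse identities into the semilattice $(Y',+)$ and pinning the three classes down to a common skew-ring component; once that is done, uniqueness of group inverses and the inclusion $\rho \subseteq \nu$ close the argument with no further work.
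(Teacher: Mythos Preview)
Your argument is correct and complete; it also takes a genuinely different route from the paper. The paper works inside each component $S_{\alpha}$ using the Rees matrix coordinates: it invokes Lemma~\ref{l:5.2} to translate $a\,\mathscr{Y}\,b$ into an equation among sandwich entries, deduces that the middle coordinates satisfy $g-h\in\langle P\rangle$ (in fact $g-h=0$), and then applies Lemma~\ref{l:5.5} to conclude $a\,\sigma_{\alpha}\,b$ for the least skew-ring congruence $\sigma_{\alpha}$ on $S_{\alpha}$; finally $\sigma_{\alpha}\subseteq\nu|_{S_{\alpha}}$ gives $\mathscr{Y}\subseteq\nu$. Your proof bypasses the Rees matrix machinery and Lemmas~\ref{l:5.2} and~\ref{l:5.5} entirely, arguing structurally in the quotient $S/\nu$: a common additive inverse, pinned to one skew-ring component via the underlying semilattice, forces $\overline{a+0_a}=\overline{b+0_b}$ by uniqueness of group inverses, and $\rho\subseteq\nu$ finishes the job. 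Your approach is shorter and more conceptual; the paper's computation, on the other hand, produces the sharper intermediate fact $\mathscr{Y}|_{S_{\alpha}}\subseteq\sigma_{\alpha}$, though your argument would yield that too if run on $S_{\alpha}/\sigma_{\alpha}$ in place of $S/\nu$.
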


\begin{proof}
Let $a,b \in S$ and $a\mathscr{Y} b$. Then there exists some $\alpha \in Y$ such that $a,b \in S_{\alpha}$. Let $a+0_a=(i,g,\lambda)$, $b+0_b=(j,h,\mu)$. By Lemma \ref{l:5.2}, we get
	
\vspace{-1em}
\begin{center}
$(i,g,\lambda) = (i,-p_{\delta,i},\delta)+(j,h,\mu)+(k,-p_{\lambda.k},\lambda)$,
\end{center}
\vspace{-1em}
since $(i,-p_{\delta,i},\delta)\, \stackrel{*}{\mathscr{R}^{+}}\,(i,g,\lambda)\, \stackrel{*}{\mathscr{L}^{+}}\,(k,-p_{\lambda.k},\lambda)$.
	
It follows that $g=-p_{\delta,i}+p_{\delta,j}+h+p_{\mu,k}-p_{\lambda,k}$ whence $g+p_{\lambda,k}-p_{\mu,k}-h-p_{\delta,j}+p_{\delta,i}=0$, where $0$ is the zero of $R$. Taking $k=\delta=o$, we have $(g-h)=0 \in \langle P \rangle$. This implies by Lemma \ref{l:5.5}, $a \, 	\sigma_{\alpha} \, b$, where $\sigma_{\alpha}$ is the least skew-ring congruence on $S_{\alpha}$. Hence $\mathscr{Y}|_{S_{\alpha}} \subseteq \sigma_{\alpha}$ for all $\alpha \in Y$.
	
Let ${\nu}|{_{S_{_\alpha}}} = \nu_{_\alpha}$. Then $\nu = \displaystyle{\bigcup_{\alpha \in Y} \nu_{_\alpha}}$. Since $S_{_\alpha}/\nu_{_\alpha}$ is a skew-ring, it follows that $\sigma_{_\alpha} \subseteq \nu_{_\alpha}$ for all $\alpha \in Y$. Therefore,  $\mathscr{Y}|_{S_{\alpha}} \subseteq
\sigma_{\alpha} \subseteq \nu_{_\alpha}$ for all $\alpha \in Y$ and hence $\mathscr{Y^*} \subseteq \nu$.
\end{proof}

\begin{definition}
A congruence $\xi$ on a semiring $S$ is said to be a generalized additive idempotent pure congruence if $a \, \xi \, e$ with $a\in S$ and $e \in E^{+}(S)$ implies that $na=(n+1)a$ for some positive integer $n$.
	
\end{definition}

\begin{theorem} \label{t:5.8}
Let $S$ be a completely Archimedean semiring which is a nil-extension of $K = \mathscr{M}(I, R, \Lambda;P)$. Then $\mathscr{Y}$ is the greatest generalized additive idempotent pure congruence on $S$.
	\end{theorem}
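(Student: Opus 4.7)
My plan is to establish the theorem in three stages: first that $\mathscr{Y}$ is a semiring congruence on $S$, next that $\mathscr{Y}$ is generalized additive idempotent pure, and finally that every generalized additive idempotent pure congruence on $S$ is contained in $\mathscr{Y}$. Throughout I work in the Rees matrix description of the completely simple kernel $K = \mathscr{M}(I, R, \Lambda; P)$, writing $a + 0_a = (i, g, \lambda)$ and $b + 0_b = (j, h, \mu)$. A direct calculation shows that $V^+((i, g, \lambda)) = V^+((j, h, \mu))$ is equivalent to the three conditions $p_{\gamma, i} = p_{\gamma, j}$ for every $\gamma \in \Lambda$, $p_{\lambda, k} = p_{\mu, k}$ for every $k \in I$, and $g = h$; I will use this reformulation repeatedly.

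The first stage then reduces to checking that these three conditions are preserved under adding and multiplying by a further element $c \in S$, which is a routine verification using the sandwich-matrix assumptions (i)--(vi) of Section 5. The second stage is brief: if $a \, \mathscr{Y} \, e$ with $e = (i_0, -p_{\lambda_0, i_0}, \lambda_0) \in E^+(S)$, then $0_e = e$ gives $V^+(a + 0_a) = V^+(e)$; matching entries at $k = \gamma = o$ produces $g = -p_{\lambda_0, i_0}$, and the remaining two matched equations yield $p_{\lambda, i} = -g$, which is exactly the condition for $a + 0_a$ itself to be an additive idempotent. Since each quasi skew-ring of $S$ contains a unique additive idempotent and $a + 0_a$ lies in the same quasi skew-ring as $a$, I conclude $a + 0_a = 0_a$, i.e.\ $a \, \rho \, 0_a$. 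Picking $n$ with $na \in K$ and using $0_{na} = 0_a$ then forces $na = 0_a$, and associativity gives $(n+1)a = na + a = a + na = a + 0_a = 0_a = na$, the required relation.

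The third stage is the heart of the proof. Let $\xi$ be any generalized additive idempotent pure congruence with $a \, \xi \, b$. For every $x \in V^+(a + 0_a)$ the element $e_1 = (a + 0_a) + x$ lies in $E^+(S)$, and compatibility of $\xi$ yields $c = (b + 0_a) + x \, \xi \, e_1$. The generalized additive idempotent pure hypothesis then forces $nc = (n+1)c$ for some $n$, which in the Rees matrix setting is equivalent to $c$ being additively idempotent. Expanding $c = (j, \, h + p_{\mu, i} - p_{\lambda, i} - g - p_{\gamma, i}, \, \gamma)$, the idempotency condition becomes $h + p_{\mu, i} - p_{\lambda, i} - g = p_{\gamma, i} - p_{\gamma, j}$ for every $\gamma \in \Lambda$; specializing to $\gamma = o$ gives $h + p_{\mu, i} = g + p_{\lambda, i}$, and the general case then delivers $p_{\gamma, i} = p_{\gamma, j}$ for all $\gamma$. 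To extract the remaining two conditions I repeat the argument with $a, b$ replaced by $a + z, b + z$ for an arbitrary $z = (m, t, \lambda_0) \in K$ and $x' \in V^+(a + z)$: the generalized additive idempotent pure property forces $(b + z) + x'$ to be additively idempotent, and after cancellation this reduces to $h + p_{\mu, m} = g + p_{\lambda, m}$ for every $m \in I$. Specializing to $m = o$ gives $g = h$, and the equation then collapses to $p_{\lambda, m} = p_{\mu, m}$ for all $m$, yielding $V^+(a + 0_a) = V^+(b + 0_b)$ and hence $a \, \mathscr{Y} \, b$.

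The main obstacle will be the bookkeeping inside the non-abelian group $(R, +)$: every cancellation must be executed without silently invoking commutativity, and the Rees sandwich identities must be applied with care. The decisive design choice is the introduction of the auxiliary element $z$ in the second round of applications of the generalized additive idempotent pure property; without it one obtains only two of the three desired conditions and loses all leverage on the second $I$-index $m$.
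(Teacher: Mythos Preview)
Your three-stage outline is sound and the argument is essentially correct, but it diverges from the paper's proof in two significant ways.

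First, the paper proves additive compatibility of $\mathscr{Y}$ via the abstract characterization of Lemma~\ref{l:5.2}(iii) (using $0_{(a+0_a+x)}$ and $0_{(x+a+0_a)}$), whereas you rely on the explicit ``three conditions'' reformulation $g=h$, $p_{\gamma,i}=p_{\gamma,j}$, $p_{\lambda,k}=p_{\mu,k}$.  Your reformulation is correct, but calling the multiplicative compatibility a ``routine verification'' undersells the work: one must show, for instance, that $p_{\lambda\delta,ik}=p_{\mu\delta,jk}$ and $p_{\gamma,ik}=p_{\gamma,jk}$, and these require nontrivial manipulation of the sandwich identities (ii)--(vi).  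The paper spends roughly a page on exactly this computation (equations (1)--(4) and the surrounding substitutions), so you should expect comparable effort rather than a one-line dismissal.

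Second, and more interestingly, your maximality argument (stage three) is genuinely different.  The paper simply invokes Theorem~2.5 of \cite{guo} for the additive reduct $(K,+)$ and lifts the conclusion through $a\mapsto a+0_a$.  Your direct Rees-matrix argument---forcing $(b+0_a)+x$ and then $(b+z)+x'$ to be idempotent via the generalized additive idempotent pure hypothesis, and reading off the three conditions by specializing $\gamma$ and $m$ to $o$---is self-contained and avoids the external reference.  Two small points to tighten: (a) you tacitly use $b+0_a=(b+0_b)+0_a$, which holds because the retraction $x\mapsto x+0_x$ is an additive homomorphism onto $K$ in a completely Archimedean semiring (cf.\ the remark in the proof of Lemma~\ref{l:5.5}); (b) in the non-abelian group $(R,+)$ the idempotency condition yields $h+p_{\mu,i}-p_{\lambda,i}-g=-p_{\gamma,j}+p_{\gamma,i}$, not $p_{\gamma,i}-p_{\gamma,j}$ as you wrote---the conclusion is unaffected since both sides vanish at $\gamma=o$, but the order matters.
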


\begin{proof}
Clearly, $\mathscr{Y}$ is an equivalence relation. Let $a,b \in S$ and $a\,\mathscr{Y}\,b$. By Lemma \ref{l:5.2}, for any $x, c \in S$, $a+0_a= 0_{(a+0_a+x+c+0_c)}+b+0_b+0_{(x+c+0_c+a+0_a)}$ and
$b+0_b=0_{(b+0_b+x+c+0_c)}+a+0_a+0_{(x+c+0_c+b+0_b)}$.
	
Hence, $(c+a)+0_{(c+a)}=c+0_c+a+0_a=c+0_c+ 0_{(a+0_a+x+c+0_c)}+b+0_b+0_{(x+c+0_c+a+0_a)} =0_{(c+0_c+a+0_a+x)}+c+0_c+b+0_b+ 0_{(x+c+0_c+a+0_a)} 	= 0_{(c+a+0_{c+a}+x)}+(c+b)+0_{c+b}+0_{(x+c+a+0_{c+a})}$, by Lemma \ref{l:3.3} (ii). Similarly, $(c+b)+0_{c+b}=0_{(c+b+0_{c+b}+x)}+(c+a)+0_{c+a}+0_{(x+c+b+0_{c+b})}$.
	
\noindent This implies $ (c+a)\, \mathscr{Y}\, (c+b)$. Dually, it follows that $(a+c) \, \mathscr{Y}\, (b+c)$.
	
Let $a,b,c \in S$ and $a\,\mathscr{Y}\,b$. We now show that $(ac)\, \mathscr{Y}\, (bc)$. Let $a+0_a=(i,x,\lambda)$, $b+0_b=(j,y,\mu)$ and $c+0_c=(k,z,\nu)$.
	
By Lemma \ref{l:5.2}, $a+0_a=e'+b+0_b+f'$ for all $e',f' \in E^{+}(S)$ with $e'\, \stackrel{*}{\mathscr{R}^{+}}\,a \, \stackrel{*}{\mathscr{L}^{+}} \,f'$, \\ i.e., $(i,x,\lambda) =
(i,-p_{t,i},t)$+$(j,y,\mu)+(s,-p_{\lambda,s},\lambda)$, for all $t \in \Lambda$ and for all $s \in I$, \\
i.e., $(i,x,\lambda) = (i,-p_{t,i}+p_{t,j}+y+p_{\mu,s}-p_{\lambda,s},\lambda)$, for all
$t \in \Lambda$ and for all $s \in I$, \\
i.e., $x=-p_{t,i}+p_{t,j}+y+p_{\mu,s}-p_{\lambda,s}$ for all $t\in \Lambda$ and for all $s \in I$. $\hfill$ ...(1)

We also note that $xz=yz$ for any $z \in S$. $\hfill$ ...(2)
	
Again, $(i,x,\lambda)(k,z,\nu)$= $(i,-p_{t,i},t)(k,z,\nu)$+$(j,y,\mu)(k,z,\nu)$+
$(s,-p_{\lambda,s},\lambda)(k,z,\nu)$,
	
\noindent i.e., $(ik,-p_{\lambda\nu,ik}+xz,\lambda \nu) $= $(ik,-p_{t \nu, ik},t \nu)$+$(jk,-p_{\mu \nu,jk}+yz,\mu \nu)+$ $(sk,-p_{\lambda \nu,sk},\lambda \nu)$,
	
\noindent i.e., $(ik,-p_{\lambda\nu,ik}+xz,\lambda \nu) $=  $(ik,-p_{t \nu, ik}+p_{t \nu,jk}-p_{\mu \nu,jk}+yz+p_{\mu \nu, sk}-p_{\lambda \nu,sk},\lambda \nu)$,
	
\noindent i.e., $-p_{\lambda\nu,ik}+xz$=$-p_{t \nu, ik}+p_{t \nu,jk}-p_{\mu \nu,jk}+yz+p_{\mu \nu,
sk}-p_{\lambda \nu,sk}$,
	
\noindent i.e., $-p_{\lambda\nu,ik}+xz$=$-p_{t \nu, ik}+p_{t \nu,jk}-p_{\mu \nu,jk}+p_{\mu \nu, sk}-p_{\lambda \nu,sk}+yz$. $\hfill$ ...(3)
	
\noindent Now, let $e = (ik,-p_{\delta,ik},\delta)$, $f=(l,-p_{\lambda \nu,l},\lambda \nu) \in E^{+}(S)$. Then $e \, \stackrel{*}{\mathscr{R}^{+}}\,(ac) \, \stackrel{*}{\mathscr{L}^{+}}\,f$.
	
\noindent Now,
	
\vspace{-.5em}
	
\begin{tabular}{ccl}
$e+bc+0_{bc}+f$ & = &$(ik,-p_{\delta,ik},\delta)+(jk,-p_{\mu \nu,jk}+yz,\mu \nu)+
(l,-p_{\lambda \nu,l},\lambda \nu) $, \\
$ $ & = & $(ik,-p_{\delta,ik}+p_{\delta,jk}-p_{\mu \nu,jk}+yz+p_{\mu \nu,
l}-p_{\lambda \nu,l},\lambda \nu)$\\
$ $ & = & $(ik,-p_{\delta \nu,ik}+p_{\delta \nu,i}-p_{\delta,i}+p_{\delta,j}-p_{\delta \nu,j}
+p_{\delta \nu,jk}-p_{\mu \nu,jk}$\\
$ $ & $ $ & $\, \, \, \, \, \, +yz+p_{\mu \nu,lk}-p_{\mu,lk}+p_{\mu,l}-p_{\lambda,l}+
p_{\lambda,lk}-p_{\lambda \nu,lk},\lambda \nu)$
\end{tabular}
	
\noindent i.e., $e+bc+0_{bc}+f  = (ik,-p_{\delta \nu,ik}+p_{\delta \nu,jk}-p_{\mu \nu,jk}+yz+p_{\mu \nu,lk} -p_{\lambda \nu,lk},\lambda \nu)$, $\hfill$  ...(4)

[By putting once $t=\delta$ and $t=\delta \nu$ and equating in $(1)$ and again by putting $s = l$ and $s = lk$ and equating in $(1)$ we obtain $(4)$]
	
Now, by substituting $t=\delta$ and $s=l$ in $(3)$ we can obtain
	
\begin{center}
\begin{tabular}{ccl}
$-p_{\lambda\nu,ik}+xz $ & $= $ & $-p_{\delta \nu, ik}+p_{\delta \nu,jk}-p_{\mu \nu,jk}+p_{\mu \nu,
lk}-p_{\lambda \nu,lk}+yz $\\
$ $ & $= $ & $-p_{\delta \nu,ik}+p_{\delta \nu,jk}-p_{\mu \nu,jk}+yz+p_{\mu \nu,lk}-p_{\lambda \nu,lk}  $.
\end{tabular}
\end{center}
	
\noindent Therefore,
	
\vspace{-.1em}
\begin{tabular}{ccl}
$e+bc+0_{bc}+f $ & = & $(ik,-p_{\delta \nu,ik}+p_{\delta \nu,jk}-p_{\mu \nu,jk}+yz+p_{\mu \nu,lk}
-p_{\lambda \nu,lk},\lambda \nu)$\\
$ $ & = & $(ik,-p_{\lambda\nu,ik}+xz,\lambda \nu) $ \\
$ $ & = & $ac+0_{ac}$.
\end{tabular}
	
\noindent Thus, we see that $ac+0_{ac}=e+bc+0_{bc}+f$ for any $e, f \in E^{+}(S)$ with $e\, \stackrel{*}{\mathscr{R}^{+}} \,(ac)\, \stackrel{*}{\mathscr{L}^{+}} \,f$. Similarly, we can show that $bc+0_{bc}=g+ac+0_{ac}+h$ for any $g, h \in E^{+}(S)$ with $g\,\stackrel{*}{\mathscr{R}^{+}} \,(bc)\, \stackrel{*}{\mathscr{L}^{+}}\,h$. Consequently, $\mathscr{Y}$ is a congruence on the semiring $S$.
	
Next we show that $\mathscr{Y}$ is a generalized additive idempotent pure congruence on $S$. Let $a \in S$ with $a+0_a=(i,g,\lambda) \in K$, $e=(k, -p_{\lambda,k},\lambda) \in E^{+}(S)$ and $a\,\mathscr{Y}\,e$. Then $V^{+}(a+0_a) = V^{+}(e)$. By Lemma \ref{l:5.2}, for $f=(i,-p_{\lambda,i},\lambda)$, $h=(k,-p_{\lambda,k},\lambda) \in E^{+}(S)$ with $f\, \stackrel{*}{\mathscr{R}^{+}} \, a \, \stackrel{*}{\mathscr{L}^{+}} \, h$, we have $a+0_a 	= f+e+h  = (i,-p_{\lambda,i},\lambda)+(k, -p_{\lambda,k},\lambda)+(k,-p_{\lambda,k},\lambda) = (i, -p_{\lambda,i}, \lambda) = 0_a$.
Let $na$ be $a$-additively regular. Then from $a+0_a = 0_a$ implies $a+0_a+na = 0_a+na$, i.e., $na =(n+1)a$. Thus $\mathscr{Y}$ is a generalized additive idempotent pure congruence on $S$.

Let $\eta$ be any generalized additive idempotent pure congruence on $S$. Let  $a,b \in S$ such that $a \, \eta \, b$. Then $(a+0_a)\, \eta\, (b+0_b)$. By Theorem 2.5 \cite{guo}, it
follows that $(a+0_a)\,\mathscr{Y}\, (b+0_b)$. Now, $a\,\mathscr{Y}\,(a+0_a)\,\mathscr{Y}\,(b+0_b)\,\mathscr{Y}\,b$. Hence, $\eta \subseteq \mathscr{Y}$, which proves that $\mathscr{Y}$ is the greatest generalized additive idempotent pure congruence on $S$. \end{proof}

\begin{theorem}
Let $S=(Y; S_{\alpha})$ be a quasi completely regular semiring, where $Y$ is a b-lattice and $S_{\alpha} \, (\alpha \in Y)$ is a completely Archimedean semiring. Then $\mathscr{Y^*}= \epsilon$ on $S$ if and only if for each $\alpha \in Y$, $\epsilon_{\alpha}$ is the unique generalized additive
idempotent pure congruence on $S_{\alpha}$, where $\epsilon$ is the trivial congruence.
\end{theorem}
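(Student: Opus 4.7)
The plan is to reduce the global statement on $S$ to a local statement on each component $S_{\alpha}$, and then invoke Theorem \ref{t:5.8}, which identifies $\mathscr{Y}|_{S_{\alpha}}$ as the greatest generalized additive idempotent pure congruence on the completely Archimedean semiring $S_{\alpha}$. For the forward direction, suppose $\mathscr{Y^*} = \epsilon$. Since $\mathscr{Y} \subseteq \mathscr{Y^*} = \epsilon$, we have $\mathscr{Y}|_{S_{\alpha}} \subseteq \epsilon_{\alpha}$, and as the reverse inclusion is automatic, $\mathscr{Y}|_{S_{\alpha}} = \epsilon_{\alpha}$. By Theorem \ref{t:5.8}, $\epsilon_{\alpha}$ is therefore the greatest generalized additive idempotent pure congruence on $S_{\alpha}$. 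Combined with the fact that $\epsilon_{\alpha}$ is contained in every congruence on $S_{\alpha}$ and is itself such a congruence (vacuously: $a \, \epsilon_{\alpha} \, e$ with $e \in E^{+}(S_{\alpha})$ forces $a = e$, so $na = (n+1)a$ for every $n \geq 1$), we conclude that $\epsilon_{\alpha}$ is the unique generalized additive idempotent pure congruence on $S_{\alpha}$.

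For the reverse direction, assume that $\epsilon_{\alpha}$ is the unique generalized additive idempotent pure congruence on $S_{\alpha}$ for each $\alpha \in Y$. By Theorem \ref{t:5.8}, $\mathscr{Y}|_{S_{\alpha}}$ is such a congruence, so by hypothesis $\mathscr{Y}|_{S_{\alpha}} = \epsilon_{\alpha}$. I would then use the observation that $a \, \mathscr{Y} \, b$ in $S$ forces $a, b \in S_{\alpha}$ for a common $\alpha$: any element $x \in V^{+}(a+0_a) = V^{+}(b+0_b)$ places both $a+0_a$ and $b+0_b$ in the same completely simple kernel $K_{\alpha}$ (the same observation invoked at the start of the proof of Lemma \ref{l:5.6}). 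Hence $\mathscr{Y}$ decomposes as the disjoint union $\bigcup_{\alpha \in Y} \mathscr{Y}|_{S_{\alpha}} = \bigcup_{\alpha \in Y} \epsilon_{\alpha} = \epsilon$ on $S$, and consequently $\mathscr{Y^*} = \epsilon^{*} = \epsilon$.

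The argument is essentially a localization-and-reassembly scheme, so there is no computational obstacle. The only point requiring a moment's care is the observation that $\mathscr{Y}$-related elements lie in a common $S_{\alpha}$; once this is in hand, applying Theorem \ref{t:5.8} componentwise and using that $\mathscr{Y^*}$ is by definition the congruence generated by $\mathscr{Y}$ closes the argument in both directions.
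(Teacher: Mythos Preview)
Your proposal is correct and follows essentially the same approach as the paper's proof: both directions reduce to the identity $\mathscr{Y}|_{S_{\alpha}} = \epsilon_{\alpha}$ via Theorem~\ref{t:5.8}, using that $\mathscr{Y}$-related elements lie in a common $S_{\alpha}$ and that $\mathscr{Y^*}$ is the congruence generated by $\mathscr{Y}$. You spell out a couple of points (why $\mathscr{Y}$ respects the b-lattice decomposition, and why ``greatest'' implies ``unique'' when the greatest is $\epsilon_{\alpha}$) that the paper leaves implicit, but the logical skeleton is identical.
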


\begin{proof}
First suppose that for each $\alpha \in Y$, $\epsilon_{\alpha}$ is the unique generalized additive idempotent pure congruence on $S_{\alpha}$. Since $\mathscr{Y}$ is the greatest generalized additive idempotent congruence on $S$, it follows that $\mathscr{Y}|_{S_{\alpha}}$ = $\epsilon_{\alpha}$ on $S_{\alpha}$. Hence $\mathscr{Y^*}$ = $\epsilon$.
	
Conversely, let $\mathscr{Y^*}$ = $\epsilon$. Now since $\mathscr{Y} \subseteq \mathscr{Y^*} = \epsilon$ and $\mathscr{Y}$ is reflexive on $S$, it follows that $\mathscr{Y}	= \epsilon$ on $S$. This implies $\mathscr{Y}|_{S_{\alpha}} = \epsilon_{\alpha}$ and hence by Theorem \ref{t:5.8}, it follows that $\epsilon_{\alpha}$ is the unique generalized additive idempotent pure congruence on $S_{\alpha}$ for each $\alpha \in Y$.
\end{proof}

\noindent Combining Theorem \ref{t:4.5}, Lemma \ref{l:5.6} and Theorem \ref{t:5.8} we get the following result.

\begin{theorem}
Let $S$ be a quasi completely regular semiring. Then 	$\mathscr{Y^*} \in [\epsilon, \nu]$, where $\epsilon$ is the equality congruence and $\nu$ is the least b-lattice of skew-rings
congruence on $S$.
\end{theorem}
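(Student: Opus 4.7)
My plan is to verify the two inclusions $\epsilon \subseteq \mathscr{Y^*}$ and $\mathscr{Y^*} \subseteq \nu$ separately, both of which are essentially immediate once the three preceding results are in place.

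For the lower bound $\epsilon \subseteq \mathscr{Y^*}$, the argument is definitional: $\mathscr{Y^*}$ is, by construction, a congruence on $S$, hence reflexive, and therefore contains the equality relation $\epsilon$. No calculation is needed; I would simply appeal to the fact that $\mathscr{Y^*}$ was introduced as the congruence generated by $\mathscr{Y}$.

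For the upper bound $\mathscr{Y^*} \subseteq \nu$, I would invoke Lemma \ref{l:5.6} directly, which asserts exactly this inclusion. The underlying mechanism, which I would summarize for the reader without re-proving, is the componentwise reduction to each completely Archimedean block $S_\alpha$: writing $a+0_a = (i,g,\lambda)$ and $b+0_b = (j,h,\mu)$ in Rees matrix form and using the characterization of $\mathscr{Y}$ in Lemma \ref{l:5.2} forces $g-h \in \langle P \rangle$, so Lemma \ref{l:5.5} gives $a\,\sigma_\alpha\,b$; since $S_\alpha/\nu_\alpha$ is a skew-ring, $\sigma_\alpha \subseteq \nu_\alpha$, and assembling over $\alpha \in Y$ yields $\mathscr{Y} \subseteq \nu$. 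Because $\nu$ is itself a congruence, the minimality of $\mathscr{Y^*}$ among congruences containing $\mathscr{Y}$ promotes this to $\mathscr{Y^*} \subseteq \nu$.

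The supporting role of Theorem \ref{t:4.5} and Theorem \ref{t:5.8} is to legitimise the componentwise approach: Theorem \ref{t:5.8} ensures that on each completely Archimedean block $S_\alpha$ the relation $\mathscr{Y}$ is a genuine congruence (indeed the greatest generalized additive idempotent pure congruence), so that the Rees matrix calculation inside Lemma \ref{l:5.6} has something meaningful to act on, while Theorem \ref{t:4.5} locates $\mathscr{Y}$ inside the $\stackrel{*}{\mathscr{D}^{+}}$ picture, which is the framework in which $\nu$ was analysed in Section 3. I do not anticipate any substantive obstacle; the only point requiring a moment of care is confirming that the equality $\nu = \bigcup_{\alpha \in Y}\nu_\alpha$ used implicitly in Lemma \ref{l:5.6} is correct, but this is automatic since every $\nu$-class is contained in a single $S_\alpha$. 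The final theorem is then a one-line concatenation of the two bounds.
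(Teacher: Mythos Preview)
Your proposal is correct and matches the paper's own approach: the paper offers no argument beyond the sentence ``Combining Theorem \ref{t:4.5}, Lemma \ref{l:5.6} and Theorem \ref{t:5.8} we get the following result,'' and you have correctly identified that the upper bound is exactly Lemma \ref{l:5.6} while the lower bound is trivial from reflexivity. Your gloss on the supporting roles of Theorems \ref{t:4.5} and \ref{t:5.8} is more generous than what the paper actually uses (note in particular that Theorem \ref{t:4.5} is stated only for quasi-orthodox semirings), but this does not affect the validity of the proof.
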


\end{document}